\def\beq{\begin{equation}}
\def\eeq{\end{equation}}
\def\ba{\begin{array}}
\def\ea{\end{array}}
\def\R{\mathbb R}
\newtheorem{thm}{Theorem}[section]
\newtheorem{lm}[thm]{Lemma}
\theoremstyle{definition}
\theoremstyle{remark}
\begin{document}
\pagestyle{plain}
\title{Solutions to discrete nonlinear Kirchhoff-Choquard equations}

\author{Lidan Wang}
\email{wanglidan@ujs.edu.cn}
\address{Lidan Wang: School of Mathematical Sciences, Jiangsu University, Zhenjiang 212013, People's Republic of China}

\begin{abstract}
In this paper, we study the discrete Kirchhoff-Choquard equation
$$
-\left(a+b \int_{\mathbb{Z}^3}|\nabla u|^{2} d \mu\right) \Delta u+V(x) u=\left(R_{\alpha} *F(u)\right)f(u),\quad x\in \mathbb{Z}^3,
$$
where $a,\,b>0$ are constants, $R_{\alpha}$ is the Green's function of the discrete fractional Laplacian with $\alpha \in(0,3)$, which has no singularity but has same asymptotics as the Riesz potential. Under some suitable assumptions on $V$ and $f$, we prove the existence of nontrivial solutions and ground state solutions by variational methods.
\end{abstract}

\maketitle

{\bf Keywords:} discrete Kirchhoff-Choquard equation, ground state solutions, variational methods

\
\

{\bf Mathematics Subject Classification 2010:} 35J20, 35R02.

\section{Introduction}

The Kirchhoff-type equation
\begin{equation}\label{z}
    -\left(a+b \int_{\mathbb{R}^3}|\nabla u|^{2} d \mu\right) \Delta u+V(x) u=g(x,u),\quad u\in H^1(\mathbb{R}^3),
\end{equation}
where $a,\,b>0$, has drawn lots of interest in recent years due to the appearance of the term $\int_{\mathbb{R}^3}|\nabla u|^{2} d \mu$. For example, Wu \cite{W} proved the existence of nontrivial solutions under general assumptions on $g$ by 
the symmetric mountain pass theorem.  If $g(x,u)=g(u)$, He and Zou \cite{HZ} showed the existence of ground state solutions under the Ambrosetti-Rabinowitz
conditions on $g$ by the Nehari manifold approach; Guo \cite{G} also derived the existence of ground state solutions when $g$ does not satisfy the Ambrosetti-Rabinowitz
conditions; Wu and Tang \cite{WT} verified the existence and concentration of ground state solutions under some assumptions on $V$ and $g$ by the sign-changing Nehari manifold method. If $g(x,u)=|u|^{p-1}u$, Sun and Zhang \cite{SZ} obtained the
uniqueness of ground state solutions for $p\in(3,5)$. By a monotonicity trick and a new version of global compactness lemma, Li and Ye \cite{LY} established the existence of ground state solutions for $p\in(2,5)$. Later, Lv and Lu \cite{LL} extended this result  for all $p\in(1,5)$ by different methods. For more related works, we refer the readers to \cite{CT,CW,HQ,TS}.

On the other hand, in many physical applications, the Choquard-type nonlinearity appears naturally, namely $g(x,u)=\left(I_\alpha \ast F(u)\right)f(u)$, where $I_\alpha$ is the Riesz potential. Clearly, two nonlocal terms are involved in the equation (\ref{z}), which makes it more difficult to deal with. Recently, for $\alpha\in (1, 3)$, Zhou and Zhu \cite{ZZ1} proved the existence of ground state solutions; Liang et al. \cite{LS} obtained the existence of multi-bump solutions. For $\alpha\in(0,3)$, Chen, Zhang and Tang \cite{CZ} proved the existence of ground state solutions under some hypotheses on $V$ and $f$; Lv and Dai \cite{LD} established the existence and asymptotic behavior of ground state solutions by a Pohozaev-type constraint technique; Hu et al. \cite{H} obtained two classes of ground state solutions under the general Berestycki-Lions conditions on $f$. If $f(u)=|u|^{p-2}u$ with $p\in(2,3+\alpha)$, Lv \cite{L} demonstrated the existence and asymptotic behavior 
of ground state solutions by the
Nehari manifold and the concentration compactness principle.
For more related works, we refer the readers to \cite{GT,HT,LP,LL,YG}.

Nowadays, many researchers  turn to study  differential equations on graphs, especially for the nonlinear elliptic equations. See for examples \cite{GLY,HSZ,HLW,HX,W2,ZZ} for the  discrete nonlinear  Schr\"{o}diner equations. For the  discrete nonlinear Choquard equations, we refer the readers to \cite{LW,LZ1,LZ,W1}. Recently, Lv \cite{L1} proved the existence of ground state solutions for a class of Kirchhoff equations on lattice graphs $\mathbb{Z}^3$. To the best of our knowledge, there is no existence results for the Kirchhoff-Choquard equations on graphs. Motivated by the works mentioned above, in this paper, we would like to study a class of Kirchhoff-type equations with general convolution nonlinearity on lattice graphs $\mathbb{Z}^3$ and discuss the existence of solutions under different conditions on potential $V$.

Let us first give some notations. Let $C(\mathbb{Z}^{3})$ be the set of all functions on $\mathbb{Z}^{3}$ and $C_{c}\left(\mathbb{Z}^{3}\right)$ be the set of all functions on $\mathbb{Z}^{3}$ with finite support. We denote by the $\ell^p(\mathbb{Z}^3)$ the space of $\ell^p$-summable functions on $\mathbb{Z}^3$. Moreover, for any $u\in C(\mathbb{Z}^{3})$, we always write $
\int_{\mathbb{Z}^{3}} f(x)\,d \mu=\sum\limits_{x \in \mathbb{Z}^{3}} f(x),
$
where $\mu$ is the counting measure in $\mathbb{Z}^{3}$.

In this paper, we consider the following Kirchhoff-Choquard equation
\begin{equation}\label{aa}
-\left(a+b \int_{\mathbb{Z}^3}|\nabla u|^{2} d \mu\right) \Delta u+V(x) u=\left(R_{\alpha} *F(u)\right)f(u),\quad x\in \mathbb{Z}^3, 
\end{equation}
where $a, b>0$ are constants, $\alpha\in(0,3)$ and $R_{\alpha}$ represents the Green's function of the discrete fractional Laplacian, see \cite{MC,W1}, $$ R_{\alpha}(x,y)=\frac{K_{\alpha}}{(2\pi)^3}\int_{\mathbb{T}^3}e^{i(x-y)\cdot k}\mu^{-\frac{\alpha}{2}}(k)\,dk,\quad x,y\in \mathbb{Z}^3,$$
which contains the fractional degree
$$K_\alpha=\frac{1}{(2\pi)^3}\int_{\mathbb{T}^3}\mu^{\frac{\alpha}{2}}(k)\,dk,\,\mu(k)=6-2\sum\limits_{j=1}^3 \cos(k_j),$$
where $\mathbb{T}^3=[0,2\pi]^3,\,k=(k_1,k_2,k_3)\in\mathbb{T}^3.$  Clearly, the Green's function $R_{\alpha}$ has no singularity at $x=y$. According to \cite{MC}, the Green's function $R_\alpha$  behaves as $x-y|^{\alpha-3}$ for $|x-y|\gg 1$. 
Here $\Delta u(x)=\underset {y\sim x}{\sum}(u(y)-u(x))$ and $|\nabla u(x)|=\left(\frac{1}{2} \sum\limits_{y \sim x}(u(y)-u(x))^{2}\right)^{\frac{1}{2}}.$ 

Now we give assumptions on the potential $V$ and the nonlinearity $f$:
\begin{itemize}
\item[($h_1$)] for any $x\in\mathbb{Z}^3$, there exists $V_0>0$ such that $V(x) \geq V_0$;
\item[($h_2$)] there exists a point $x_0\in\mathbb{Z}^3$ such that $V(x)\rightarrow\infty$ as $|x-x_0|\rightarrow\infty;$

\item[($h_3$)] $V(x)$ is $\tau$-periodic in $x\in\mathbb{Z}^3$ with $\tau\in\mathbb{Z};$

\item[($f_1$)] $f(t)$ is continuous in $t\in\mathbb{R}$ and $f(t)=o(t)$ as $|t|\rightarrow 0$;
\item[($f_2$)] there are constants $c>0$ and $p>\frac{3+\alpha}{3}$ such that
$$
|f(t)|\leq c(1+|t|^{p-1}), \quad t\in \mathbb{R};
$$ 
\item[($f_3$)] there exists $\theta>4$ such that $$0\leq \theta F(t)=\theta \int_{0}^{t}f(s)\,ds \leq 2f(t)t,\quad t\in \mathbb{R};$$
\item[($f_4$)] for any $u\in H\backslash\{0\}$,$$\frac{\int_{\mathbb{Z}^3}(R_\alpha \ast F(tu))f(tu)u\,d\mu}{t^3}$$ is strictly increasing with respect $t\in (0, \infty)$.
\end{itemize}

 By $(f_1)$ and $(f_2)$, we have that for any $\varepsilon>0$, there exists $C_\varepsilon>0$ such that
\begin{equation}\label{ad}
|f(t)|\leq\varepsilon|t|+C_\varepsilon|t|^{p-1},\quad t\in \mathbb{R}.
\end{equation}
Hence
\begin{equation}\label{ae}
|F(t)|\leq\varepsilon|t|^2+C_\varepsilon|t|^{p},\quad t\in \mathbb{R}.
\end{equation}


Let $H^{1}(\mathbb{Z}^3)$ be the completion of $C_c(\mathbb{Z}^3)$ with respect to the norm
\begin{eqnarray*}
\|u\|_{H^{1}}=\left(\int_{\mathbb{Z}^3}(|\nabla u|^2+u^2)\,d\mu\right)^{\frac{1}{2}}.
\end{eqnarray*}
Let $V(x) \geqslant V_{0}>0$. We introduce a new subspace:
$$
H=\left\{u \in H^{1}\left(\mathbb{Z}^{3}\right): \int_{\mathbb{Z}^{3}} V(x) u^{2}\,d \mu<\infty\right\}
$$
with a norm
$$
\|u\|=\left(\int_{\mathbb{Z}^{3}}\left(a|\nabla u|^{2}+V(x) u^{2}\right) d \mu\right)^{\frac{1}{2}},
$$
where $a$ is a positive constant. The space $H$ is a Hilbert space and its inner product is
$$
(u, v)=\int_{\mathbb{Z}^3}\left(a\nabla u \nabla v+V u v\right)\, d \mu.
$$
Clearly, for any $u \in H$ and $q\geq 2$, we have
\begin{equation}\label{ac}
\|u\|_{q} \leq \|u\|_{2}\leq C \|u\|.
\end{equation}

The energy functional $J(u): H\rightarrow\R$ associated to the equation (\ref{aa}) is given by
$$
J(u)=\frac{1}{2} \int_{\mathbb{Z}^{3}}\left(a|\nabla u|^{2}+V(x) u^{2}\right) d \mu+\frac{b}{4}\left(\int_{\mathbb{Z}^{3}}|\nabla u|^{2} d \mu\right)^{2}-\frac{1}{2} \int_{\mathbb{Z}^3}(R_\alpha \ast F(u))F(u)\,d\mu.
$$
Moreover, for any $\phi\in H$, one gets easily that
$$
\left\langle J^{\prime}(u), \phi\right\rangle=\int_{\mathbb{Z}^{3}}(a \nabla u \nabla \phi+V(x) u \phi) d \mu+b \int_{\mathbb{Z}^{3}}|\nabla u|^{2} d \mu \int_{\mathbb{Z}^{3}} \nabla u \nabla \phi d \mu-\int_{\mathbb{Z}^3}(R_\alpha \ast F(u))f(u)\phi\,d\mu.
$$
We say that $u\in H$ is a nontrivial solution of the equation (\ref{aa}), if $u$ is a nonzero critical point of $J$, i.e. $J'(u)=0$ with $u\neq 0$. A ground state solution of the equation (\ref{aa}) means that
$u$ is a nonzero critical point of $J$ with the least energy, that is,
$$
J(u)=\inf\limits_{\mathcal{N}} J>0,
$$
where 
$$
\mathcal{N}=\left\{u \in H \backslash\{0\}:\left\langle J^{\prime}(u), u\right\rangle=0\right\}
$$
is the Nehari manifold.

Now we state our main results.

\begin{thm}\label{t0}
Let $(h_1)$, $(h_2)$ and $(f_1)$-$(f_3)$ hold. Then the equation (\ref{aa}) has a nontrivial solution. 
\end{thm}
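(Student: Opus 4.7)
The plan is to apply the classical Mountain Pass Theorem to $J:H\to\mathbb{R}$ and exploit the compact embedding $H\hookrightarrow\ell^q(\mathbb{Z}^3)$, $q\geq 2$, forced by the coercivity hypothesis $(h_2)$, to upgrade any Palais--Smale sequence at the mountain pass level into a strongly convergent one. The main subtlety is that both nonlinear terms in (\ref{aa}) are nonlocal: the Kirchhoff factor $\int|\nabla u|^2\,d\mu$ and the Choquard convolution with $R_\alpha$. However, the pointwise bound $R_\alpha(x,y)\leq R_\alpha(x,x)=K_\alpha$ tames the Choquard term, and compactness of the embedding handles the Kirchhoff term simultaneously.

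\textbf{Mountain pass geometry and bounded Palais--Smale sequences.} From $R_\alpha\leq K_\alpha$, (\ref{ae}), and (\ref{ac}),
$$\int_{\mathbb{Z}^3}(R_\alpha\ast F(u))F(u)\,d\mu\leq K_\alpha\|F(u)\|_1^2\leq C_1\|u\|^4+C_2\|u\|^{2p},$$
so $J(u)\geq\alpha_0>0$ on a small sphere $\|u\|=r$ since $p>1$. Conversely, $(f_3)$ yields $F(t)\geq c_1|t|^{\theta/2}$ for $|t|\geq 1$, and for any fixed $u_0\in C_c(\mathbb{Z}^3)\setminus\{0\}$ the Choquard contribution in $J(tu_0)$ is at most $-c_1 t^\theta\int(R_\alpha\ast|u_0|^{\theta/2})|u_0|^{\theta/2}\,d\mu$ plus lower-order terms, which dominates the Kirchhoff term $\tfrac{bt^4}{4}(\int|\nabla u_0|^2)^2$ because $\theta>4$; hence $J(tu_0)\to-\infty$ and the mountain pass level $c>0$ is well defined. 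For a Palais--Smale sequence $(u_n)$ at level $c$, one checks
\begin{align*}
J(u_n)-\tfrac{1}{\theta}\langle J'(u_n),u_n\rangle
&=\Bigl(\tfrac12-\tfrac1\theta\Bigr)\|u_n\|^2+b\Bigl(\tfrac14-\tfrac1\theta\Bigr)\Bigl(\int|\nabla u_n|^2\,d\mu\Bigr)^{\!2}\\
&\qquad+\int_{\mathbb{Z}^3}(R_\alpha\ast F(u_n))\Bigl[\tfrac1\theta f(u_n)u_n-\tfrac12 F(u_n)\Bigr]\,d\mu,
\end{align*}
in which every term on the right is nonnegative thanks to $\theta>4$, $R_\alpha>0$, $F\geq 0$, and $\tfrac1\theta f(t)t\geq\tfrac12 F(t)$ from $(f_3)$; the left side is $\leq C+\|u_n\|$, so $\sup_n\|u_n\|<\infty$.

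\textbf{Compactness, strong convergence, and existence.} A standard argument using $(h_1)$--$(h_2)$ on the lattice (the tail $\int_{|x|>R}u_n^2\,d\mu\leq\|u_n\|^2/\inf_{|x|>R}V$ is uniformly small, and on any finite ball $\ell^q$ is finite-dimensional) gives that $H\hookrightarrow\ell^q(\mathbb{Z}^3)$ is compact for every $q\geq 2$. Up to subsequences, $u_n\rightharpoonup u$ in $H$, $u_n\to u$ in every $\ell^q$ and pointwise, and $\int|\nabla u_n|^2\,d\mu\to A$. Combining (\ref{ad})--(\ref{ae}) with $R_\alpha\leq K_\alpha$, dominated convergence yields $\int(R_\alpha\ast F(u_n))f(u_n)\phi\,d\mu\to\int(R_\alpha\ast F(u))f(u)\phi\,d\mu$ for each $\phi\in C_c(\mathbb{Z}^3)$, so passing to the limit in $\langle J'(u_n),\phi\rangle\to 0$ shows that $u$ satisfies the Euler--Lagrange equation with the constant $A$ in place of $\int|\nabla u|^2\,d\mu$. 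To identify $A$, I set $w_n=u_n-u\rightharpoonup 0$; then $\|u_n\|^2=\|u\|^2+\|w_n\|^2+o(1)$ and $\int|\nabla u_n|^2\,d\mu=\int|\nabla u|^2\,d\mu+\int|\nabla w_n|^2\,d\mu+o(1)$ by the orthogonality of weak limits. Substituting these into the limit of $\langle J'(u_n),u_n\rangle=0$ and subtracting the Euler--Lagrange identity at $u$ tested against $u$, the $\ell^q$-convergence makes all Choquard cross-terms cancel and I arrive at $\lim\|w_n\|^2+bA\bigl(A-\int|\nabla u|^2\,d\mu\bigr)=0$, a sum of two nonnegative quantities, forcing each to vanish. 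Hence $u_n\to u$ strongly in $H$, $A=\int|\nabla u|^2\,d\mu$, and $J'(u)=0$. Finally, if $u\equiv 0$ then all nonlinear terms vanish along $u_n$ and $\langle J'(u_n),u_n\rangle\to 0$ forces $\|u_n\|\to 0$, so $J(u_n)\to 0$, contradicting $J(u_n)\to c>0$; thus $u\neq 0$ is the required nontrivial solution.

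\textbf{Main obstacle.} The most delicate step is the Brezis--Lieb-type identification $A=\int|\nabla u|^2\,d\mu$: without it, $u$ would only solve a ``twisted'' Kirchhoff equation rather than the original (\ref{aa}). This is precisely the point at which the two nonlocal terms interact; fortunately the discreteness of $\mathbb{Z}^3$, the boundedness of $R_\alpha$, and the compactness of $H\hookrightarrow\ell^q$ cooperate to make the Choquard remainder along the residual $w_n$ negligible, reducing the identification to the nonnegativity argument above.
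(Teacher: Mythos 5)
Your overall strategy coincides with the paper's: mountain pass geometry, boundedness of Palais--Smale sequences via $J-\tfrac1\theta\langle J',\cdot\rangle$ and $(f_3)$, compact embedding $H\hookrightarrow\ell^q(\mathbb{Z}^3)$ from $(h_1)$--$(h_2)$ to recover strong convergence, and $J(u)=c>0$ to rule out $u=0$. The substantive divergence is in how you control the Choquard term, and that is where your argument has a hole as written. You replace the discrete Hardy--Littlewood--Sobolev inequality (Lemma \ref{lm1}, which the paper introduces precisely for this purpose) by the pointwise bound $R_\alpha\leq\sup R_\alpha$ and estimate $\int(R_\alpha\ast F(u))F(u)\,d\mu\leq C\|F(u)\|_1^2\leq C_1\|u\|^4+C_2\|u\|^{2p}$. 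The last inequality needs $\|u\|_p\leq C\|u\|$, i.e.\ $p\geq2$; but $(f_2)$ only assumes $p>\tfrac{3+\alpha}{3}$, which lies in $(1,2)$ for every $\alpha\in(0,3)$, and on a discrete measure space $\ell^2$ does \emph{not} embed into $\ell^p$ for $p<2$. The same issue recurs in your ``dominated convergence'' step for $\int(R_\alpha\ast F(u_n))f(u_n)\phi\,d\mu$, which with only a sup bound on $R_\alpha$ requires $F(u_n)\to F(u)$ in $\ell^1$. The gap is repairable in two ways: either observe that on $\mathbb{Z}^3$ the growth condition $(f_2)$ with exponent $p$ implies the same condition with exponent $\max(p,2)$ (so one may assume $p\geq2$ without loss of generality --- this should be said explicitly), or simply use the HLS inequality with exponent $\tfrac{6}{3+\alpha}$ as in (\ref{uc}), which is tailored so that $\tfrac{6p}{3+\alpha}>2$ exactly when $p>\tfrac{3+\alpha}{3}$. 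Note also that $R_\alpha(x,x)$ equals $\tfrac{K_\alpha}{(2\pi)^3}\int_{\mathbb{T}^3}\mu^{-\alpha/2}(k)\,dk$, not $K_\alpha$; only the finiteness of this constant matters, but the identification is wrong.

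Your Brezis--Lieb-type identification of $A=\lim\int|\nabla u_n|^2\,d\mu$ is correct but heavier than necessary, and your closing remark overstates the obstacle: on the lattice one has $\int_{\mathbb{Z}^3}|\nabla(u_n-u)|^2\,d\mu\leq C\|u_n-u\|_2^2\to0$ by the compact embedding into $\ell^2$, so the Kirchhoff functional converges strongly along the sequence and no splitting argument is needed; this is exactly how the paper disposes of the Kirchhoff term in (\ref{ak}). Finally, your derivation of $J(tu_0)\to-\infty$ from $F(t)\geq c_1|t|^{\theta/2}$ tacitly assumes $F(\pm1)>0$ (otherwise $(f_3)$ alone is consistent with $F\equiv0$); the paper's proof via Lemma \ref{lhf}(ii) carries the same implicit nondegeneracy, so this is not a point against you specifically, but it deserves a sentence.
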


\begin{thm}\label{t1}
Let $(h_1)$, $(h_2)$ and $(f_1)$-$(f_4)$ hold. Then the equation (\ref{aa}) has a ground state solution.    
\end{thm}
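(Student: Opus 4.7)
The plan is to run the Nehari manifold method: set $c:=\inf_{\mathcal{N}}J$, produce a minimising sequence, extract a limit using the compactness supplied by $(h_2)$, and verify that the limit lies on $\mathcal{N}$ with $J=c$.

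First I would establish the fibering structure of $\mathcal{N}$. For fixed $u\in H\setminus\{0\}$ set $\gamma_u(t):=\langle J'(tu),tu\rangle$. By $(f_1)$ and (\ref{ad})--(\ref{ae}), $\gamma_u(t)>0$ for small $t>0$, while $(f_3)$ implies $F(s)\geq c_0|s|^{\theta/2}$ for $|s|$ large with $\theta/2>2$, so the Choquard contribution eventually dominates the Kirchhoff term of order $t^4$ and $\gamma_u(t)<0$ for large $t$. Continuity supplies a positive zero, and the strict monotonicity in $(f_4)$ makes it unique: each ray through the origin meets $\mathcal{N}$ at exactly one point $t_u u$, which is the unique maximiser of $t\mapsto J(tu)$. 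In particular $\mathcal{N}\neq\emptyset$.

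The a priori bounds come from the identity
\begin{equation*}
J(u)-\frac{1}{\theta}\langle J'(u),u\rangle = \Bigl(\frac{1}{2}-\frac{1}{\theta}\Bigr)\|u\|^2+b\Bigl(\frac{1}{4}-\frac{1}{\theta}\Bigr)\Bigl(\int_{\mathbb{Z}^3}|\nabla u|^2\,d\mu\Bigr)^2+\int_{\mathbb{Z}^3}(R_\alpha\ast F(u))\Bigl(\frac{1}{\theta}f(u)u-\frac{1}{2}F(u)\Bigr)d\mu.
\end{equation*}
Since $\theta>4$ and $(f_3)$ yields $\frac{1}{\theta}f(s)s\geq\frac{1}{2}F(s)\geq 0$, on $\mathcal{N}$ this gives $J(u)\geq(\frac{1}{2}-\frac{1}{\theta})\|u\|^2$. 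Moreover on $\mathcal{N}$ one has $\|u\|^2\leq\int_{\mathbb{Z}^3}(R_\alpha\ast F(u))f(u)u\,d\mu$, and since $R_\alpha$ is bounded on $\mathbb{Z}^3$, inequalities (\ref{ad})--(\ref{ae}) and (\ref{ac}) supply a universal $\rho>0$ with $\|u\|\geq\rho$. Consequently $c>0$ and every minimising sequence $\{u_n\}\subset\mathcal{N}$ is bounded in $H$. Under $(h_1)$--$(h_2)$ the embedding $H\hookrightarrow\ell^q(\mathbb{Z}^3)$ is compact for every $q\geq 2$, because the tail $\int_{|x-x_0|>R}u^2\,d\mu$ is dominated by $\|u\|^2/\inf_{|x-x_0|>R}V(x)\to 0$ as $R\to\infty$. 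Passing to a subsequence, $u_n\rightharpoonup u$ in $H$ and $u_n\to u$ in every $\ell^q$ with $q\geq 2$, hence pointwise.

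A purely discrete feature simplifies the analysis of both nonlocal terms: since $\mathbb{Z}^3$ has bounded vertex degree, $\int|\nabla v|^2\,d\mu\leq 12\int v^2\,d\mu$, so the $\ell^2$-convergence automatically upgrades to $\int|\nabla u_n|^2\,d\mu\to\int|\nabla u|^2\,d\mu$, which lets the Kirchhoff coefficient pass to the limit. Combined with dominated convergence for the double convolution integral, I can pass to the limit in $\langle J'(u_n),\phi\rangle$ for every $\phi\in C_c(\mathbb{Z}^3)$, yielding $J'(u)=0$. The lower bound $\|u_n\|\geq\rho$ together with strong $\ell^q$-convergence prevents $u=0$, so $u\in\mathcal{N}$ and $J(u)\geq c$; the reverse inequality $J(u)\leq\liminf J(u_n)=c$ follows from weak lower semicontinuity of $\|\cdot\|^2$ and of $\int|\nabla\cdot|^2\,d\mu$ together with the just-established convergence of the Choquard term. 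The step I expect to require the most care is the simultaneous passage to the limit in the Kirchhoff coefficient $\int|\nabla u_n|^2\,d\mu$ and the Choquard double integral $\int(R_\alpha\ast F(u_n))F(u_n)\,d\mu$; in the continuous setting this is notoriously delicate because weak $H^1$-convergence does not control the $L^2$-norm of the gradient, whereas in the present discrete framework the comparison $\|\nabla v\|_2\leq C\|v\|_2$ coming from bounded vertex degree turns this obstruction into a routine verification and lets the Nehari scheme close cleanly.
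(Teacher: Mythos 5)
Your overall strategy (direct minimisation of $J$ on $\mathcal{N}$) differs from the paper's, which instead runs the mountain pass theorem to produce a critical point at the level $c_2=\inf_{\gamma\in\Gamma_2}\max_{s}J(\gamma(s))$ and then proves separately (Lemma \ref{jb}) that $c_2=\inf_{\mathcal{N}}J$, so that the mountain-pass solution is automatically a ground state. The difference is not cosmetic: your route contains a genuine gap at its decisive step. From $u_n\in\mathcal{N}$ and $J(u_n)\to c$ you only know $\langle J'(u_n),u_n\rangle=0$; nothing forces $J'(u_n)\to 0$ in $H^{*}$. Consequently your claim that you ``pass to the limit in $\langle J'(u_n),\phi\rangle$ for every $\phi\in C_c(\mathbb{Z}^3)$, yielding $J'(u)=0$'' is unjustified --- the limit of $\langle J'(u_n),\phi\rangle$ exists by the compactness you describe, but there is no reason it equals zero. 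To upgrade a minimising sequence on $\mathcal{N}$ to a Palais--Smale sequence one normally invokes Ekeland's variational principle on $\mathcal{N}$, which requires $\mathcal{N}$ to be a $C^1$ manifold; here $f$ is merely continuous, so that route is blocked (this is exactly why Section 5 of the paper introduces the Szulkin--Weth reduction $\Psi=J\circ m$ on the unit sphere). Even the weaker assertion $u\in\mathcal{N}$ does not follow from your convergences alone: weak convergence in $H$ only gives $\|u\|\le\liminf\|u_n\|$ (the term $\int V u_n^2$ with unbounded $V$ is not controlled by $\ell^2$-convergence), so a priori one only obtains $\langle J'(u),u\rangle\le 0$, and ruling out strict inequality needs the fibering/Fatou argument the paper uses in the proof of Theorem \ref{t2}. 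Either adopt the paper's mountain-pass identification $c=c_1=c_2$, or carry out the $\Psi=J\circ m$ reduction; as written, the key conclusion $J'(u)=0$ is asserted rather than proved.

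A secondary, fixable point: your lower bound $\|u\|\ge\rho$ on $\mathcal{N}$ is derived from the boundedness of $R_\alpha$, which would force you to control $\|F(u)\|_1$ and hence $\|u\|_p^p$ with $p>\frac{3+\alpha}{3}$ possibly smaller than $2$; on $\mathbb{Z}^3$ the embedding $\ell^2\hookrightarrow\ell^p$ fails for $p<2$. The paper's estimate goes through the discrete HLS inequality (\ref{bo}) with exponent $\frac{6}{3+\alpha}$, which is precisely what makes the hypothesis $p>\frac{3+\alpha}{3}$ sufficient. Your observation that $\|\nabla v\|_2\le C\|v\|_2$ on the lattice, which lets the Kirchhoff coefficient pass to the limit, is correct and is also used in the paper.
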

\begin{thm}\label{t2}
 Let $(h_1)$, $(h_3)$ and $(f_1)$-$(f_4)$ hold. Then the equation (\ref{aa}) has a ground state solution.
    
\end{thm}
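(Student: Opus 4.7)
The plan is to mirror the Nehari/mountain-pass strategy used for Theorem~\ref{t1}, but compensate for the loss of the compact embedding (caused by replacing coercivity $(h_2)$ with periodicity $(h_3)$) via a discrete concentration-compactness argument. By $(f_1)$--$(f_3)$, $J$ retains the standard mountain-pass geometry on $H$, and $\theta>4$ in $(f_3)$ produces a bounded Palais--Smale sequence $\{u_n\}\subset H$ at the mountain-pass level $c>0$. Hypothesis $(f_4)$ ensures that for every $u\in H\setminus\{0\}$ there is a unique $t_u>0$ with $t_u u\in\mathcal{N}$ and $\max_{t>0}J(tu)=J(t_u u)$, whence $c=\inf_{\mathcal{N}}J$; the task reduces to showing this infimum is attained.

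The key new step is a discrete Lions-type vanishing lemma: if a bounded sequence $\{v_n\}\subset H$ satisfies
\[
\sup_{y\in\mathbb{Z}^3}\sum_{|x-y|\le r}v_n(x)^2\to 0
\]
for some fixed $r>0$, then $v_n\to 0$ in $\ell^q(\mathbb{Z}^3)$ for every $q>2$. Combined with the bounds \eqref{ad}, \eqref{ae}, the asymptotic decay $R_\alpha(x,y)\lesssim|x-y|^{\alpha-3}$, and a discrete Hardy--Littlewood--Sobolev inequality, vanishing of $\{u_n\}$ would force $\int_{\mathbb{Z}^3}(R_\alpha*F(u_n))f(u_n)u_n\,d\mu\to 0$, contradicting $\langle J'(u_n),u_n\rangle=o(1)$ together with $J(u_n)\to c>0$. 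Hence there exist $\delta>0$ and $y_n\in\mathbb{Z}^3$ with $\sum_{|x-y_n|\le r}u_n(x)^2\ge\delta$. Replacing $y_n$ by a nearest point of $\tau\mathbb{Z}^3$ (up to a bounded perturbation), the $\tau$-periodicity of $V$ in $(h_3)$ makes $J$ invariant under the shift, so $\tilde u_n(x):=u_n(x+y_n)$ still satisfies $J(\tilde u_n)\to c$ and $J'(\tilde u_n)\to 0$. Extracting a subsequence, $\tilde u_n\rightharpoonup u$ in $H$ and pointwise, with $u\ne 0$ by the non-vanishing bound.

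It remains to show that $u$ is a critical point of $J$ with $J(u)=c$. Passing $\langle J'(\tilde u_n),\phi\rangle\to 0$ to the limit for $\phi\in C_c(\mathbb{Z}^3)$, every term is handled by pointwise and dominated convergence except the Kirchhoff coefficient, for which weak convergence only yields $A:=\lim_n\|\nabla\tilde u_n\|_2^2\ge\|\nabla u\|_2^2$; \emph{a priori} this means $u$ solves only the modified equation
\[
-(a+bA)\Delta u+V(x)u=(R_\alpha\ast F(u))f(u).
\]
I expect this to be the main obstacle and would resolve it as follows: testing the modified equation against $u$ yields $\langle J'(u),u\rangle=b\|\nabla u\|_2^2(\|\nabla u\|_2^2-A)\le 0$. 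If $A>\|\nabla u\|_2^2$ strictly, then $\langle J'(u),u\rangle<0$, and $(f_4)$ furnishes a unique $t_u\in(0,1)$ with $t_u u\in\mathcal{N}$; combining a discrete Brezis--Lieb splitting for the non-singular Choquard nonlinearity $(R_\alpha*F(\tilde u_n))F(\tilde u_n)$ with Fatou on the quadratic terms and the strict monotonicity in $(f_4)$, one obtains $c\le J(t_u u)<J(u)\le \liminf_n J(\tilde u_n)=c$, a contradiction. Therefore $A=\|\nabla u\|_2^2$, $u\in\mathcal{N}$, $J'(u)=0$, and $J(u)=c$, producing the required ground state.
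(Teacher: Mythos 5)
Your overall strategy coincides with the paper's: rule out vanishing via a discrete Lions lemma, translate by the period $\tau$ using $(h_3)$ to get a nonzero weak limit, and then handle the Kirchhoff coefficient by showing $A=\|\nabla u\|_2^2$ through a contradiction argument on the Nehari level. You have correctly identified the main obstacle (the weak limit a priori solves only the equation with coefficient $bA$) and the correct device for resolving it.

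However, the concluding inequality chain as you wrote it, $c\le J(t_u u)<J(u)\le \liminf_n J(\tilde u_n)=c$, does not hold step by step. First, $J(u)\le\liminf_n J(\tilde u_n)$ is not available: the Choquard term $I$ is weakly lower semicontinuous, so it enters $J$ with the \emph{wrong} sign for lower semicontinuity of $J$, and a Brezis--Lieb splitting only gives $J(\tilde u_n)=J(u)+\tfrac12\|\tilde u_n-u\|^2+\cdots-I(\tilde u_n-u)+o(1)$, whose remainder has no definite sign. Second, $J(t_u u)<J(u)$ does not follow from $(f_4)$ alone. The correct comparison — the one the paper uses — is through the functional $u\mapsto J(u)-\tfrac14\langle J'(u),u\rangle$, whose integrand $\tfrac14 f(s)s-\tfrac12F(s)$ is pointwise nonnegative by $(f_3)$, so Fatou applies to it; moreover, by $(f_4)$ the map $s\mapsto \tfrac14 sg'(s)-g(s)$ is strictly increasing, which gives
$$c\le J(s_0v)=\Bigl[J-\tfrac14\langle J',\cdot\rangle\Bigr](s_0v)<\Bigl[J-\tfrac14\langle J',\cdot\rangle\Bigr](v)\le\liminf_n\Bigl[J(\tilde u_n)-\tfrac14\langle J'(\tilde u_n),\tilde u_n\rangle\Bigr]=c,$$
using $s_0\in(0,1)$ and $\langle J'(s_0v),s_0v\rangle=0$. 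With this substitution your argument closes; as written, the last chain is the one genuine gap.
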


The rest of this paper is organized as follows. In Section 2, we present some preliminary results on graphs. In Section 3, we prove Theorem \ref{t0} by mountain pass theorem. In Section 4, we prove Theorem \ref{t1} based on the mountain pass theorem and Nehari manifold approach. In Section 5, we prove Theorem \ref{t2} by the method of generalized Nehari manifold.

\section{Preliminaries} 
In this section, we introduce the basic settings on graphs and give some basic results. 

Let $G=(\mathbb{V}, \mathbb{E})$ be a connected, locally finite graph, where $\mathbb{V}$ denotes the vertex set and $\mathbb{E}$ denotes the edge set. We call vertices $x$ and $y$ neighbors, denoted by $x \sim y$, if there exists an edge connecting them, i.e. $(x, y) \in \mathbb{E}$. For any $x,y\in \mathbb{V}$, the distance $d(x,y)$ is defined as the minimum number of edges connecting $x$ and $y$, i.e.
$$d(x,y)=\inf\{k:x=x_0\sim\cdots\sim x_k=y\}.$$
Let $B_{r}(a)=\{x\in\mathbb{V}: d(x,a)\leq r\}$ be the closed ball of radius $r$ centered at $a\in \mathbb{V}$. For brevity, we write $B_{r}:=B_{r}(0)$.

In this paper, we consider, the natural discrete model of Euclidean space, the integer lattice graph.  The $3$-dimensional integer lattice graph, denoted by $\mathbb{Z}^3$, consists of the set of vertices $\mathbb{V}=\mathbb{Z}^3$ and the set of edges $\mathbb{E}=\{(x,y): x,\,y\in\mathbb{Z}^{3},\,\underset {{i=1}}{\overset{3}{\sum}}|x_{i}-y_{i}|=1\}.$
Here we denote $|x-y|:=d(x,y)$ on the lattice graph $\mathbb{Z}^{3}$.

For $u,v \in C\left(\mathbb{Z}^{3}\right)$, we define the Laplacian of $u$ as
$$
\Delta u(x)=\sum\limits_{y \sim x}(u(y)-u(x)),
$$
 and the gradient form $\Gamma$ is defined as
$$
\Gamma(u, v)(x)=\frac{1}{2} \sum\limits_{y \sim x}(u(y)-u(x))(v(y)-v(x)) .
$$
We write $\Gamma(u)=\Gamma(u, u)$ and denote the length of the gradient as
$$
|\nabla u|(x)=\sqrt{\Gamma(u)(x)}=\left(\frac{1}{2} \sum\limits_{y \sim x}(u(y)-u(x))^{2}\right)^{\frac{1}{2}}.
$$

The space $\ell^{p}\left(\mathbb{Z}^{3}\right)$ is defined as

$$
\ell^{p}\left(\mathbb{Z}^{3}\right)=\left\{u \in C\left(\mathbb{Z}^{3}\right):\|u\|_{p}<\infty\right\},
$$

where

$$
\|u\|_{p}= \begin{cases}\left(\sum\limits_{x \in \mathbb{Z}^{3}}|u(x)|^{p}\right)^{\frac{1}{p}}, & \text { if } 1 \leq p<\infty, \\ \sup\limits_{x \in \mathbb{Z}^{3}}|u(x)|, & \text { if } p=\infty.\end{cases}
$$

The following discrete Hardy-Littlewood-Sobolev (HLS for abbreviation) inequality plays a key role in this paper, see \cite{LW,W1}.

\begin{lm}\label{lm1}
Let $0<\alpha <3,\,1<r,s<\infty$ and $\frac{1}{r}+\frac{1}{s}+\frac{3-\alpha}{3}=2$. We have the discrete
HLS inequality
\begin{equation}\label{bo}
\int_{\mathbb{Z}^3}(R_\alpha\ast u)(x)v(x)\,d\mu\leq C_{r,s,\alpha}\|u\|_r\|v\|_s,\quad u\in \ell^r(\mathbb{Z}^3),\,v\in \ell^s(\mathbb{Z}^3).
\end{equation}
And an equivalent form is
\begin{equation}\label{p1}
\|R_\alpha\ast u\|_{\frac{3r}{3-\alpha r}}\leq C_{r,\alpha}\|u\|_r,\quad u\in \ell^r(\mathbb{Z}^3),
\end{equation}
where $0<\alpha <3,\,1<r<\frac{3}{\alpha}$.
\end{lm}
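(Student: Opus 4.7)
The plan is to reduce both claims to a weak-type Young convolution inequality driven by the pointwise asymptotics of the kernel $R_\alpha$.

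First, I would observe that (\ref{bo}) and (\ref{p1}) are equivalent by duality. Given (\ref{p1}) with $q = 3r/(3-\alpha r)$, H\"older's inequality yields
$$\int_{\mathbb{Z}^3}(R_\alpha \ast u)\, v\, d\mu \leq \|R_\alpha \ast u\|_q \|v\|_{q'} \leq C_{r,\alpha}\|u\|_r\|v\|_{q'},$$
and a direct check shows that $q' = s$ precisely under the balance $\tfrac{1}{r}+\tfrac{1}{s}+\tfrac{3-\alpha}{3}=2$. The converse direction follows by testing the bilinear form against $v = |R_\alpha \ast u|^{q-1}\mathrm{sgn}(R_\alpha \ast u)$ and applying standard $\ell^p$ duality. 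So it suffices to establish (\ref{p1}).

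Second, using the Fourier representation in the definition of $R_\alpha$ and the asymptotics $R_\alpha(x,y) \sim |x-y|^{\alpha-3}$ from \cite{MC}, together with the absence of a singularity at the diagonal, I would derive the global pointwise bound
$$R_\alpha(x,y) \leq C(1+|x-y|)^{\alpha-3},\qquad x,y\in\mathbb{Z}^3.$$
Setting $p_0 = 3/(3-\alpha)$, this yields
$$\#\{y \in \mathbb{Z}^3 : R_\alpha(0,y) > t\} \leq \#\{y \in \mathbb{Z}^3 : C(1+|y|)^{\alpha-3} > t\} \leq C' t^{-p_0}$$
for small $t$, while the set is empty for $t \geq C$. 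Hence $R_\alpha \in \ell^{p_0,\infty}(\mathbb{Z}^3)$.

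Third, I would invoke the discrete analogue of the Young-O'Neil inequality in Lorentz spaces: if $K \in \ell^{p_0,\infty}$ and $u \in \ell^r$ with $1 < r,\,p_0 < \infty$ and $\tfrac{1}{q} = \tfrac{1}{r}+\tfrac{1}{p_0}-1 \in (0,1)$, then $\|K \ast u\|_q \leq C\|K\|_{p_0,\infty}\|u\|_r$. Inserting $p_0 = 3/(3-\alpha)$ produces the exponent $q = 3r/(3-\alpha r)$ requested in (\ref{p1}), and the hypothesis $1 < r < 3/\alpha$ is exactly what forces $q \in (1,\infty)$.

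The main obstacle is the weak-type Young inequality itself. In the Euclidean setting it is a classical consequence of Marcinkiewicz interpolation together with a layer-cake decomposition of $K$ into a bounded and an integrable part; the argument transfers essentially verbatim to $\mathbb{Z}^3$ with counting measure, since $\mathbb{Z}^3$ is an abelian group supporting convolution and all the measure-theoretic tools remain available. An alternative route, which avoids Lorentz spaces entirely, is to extend a discrete $u$ to a function constant on unit cubes centered at lattice points, dominate the discrete kernel by the integrable majorant $(1+|x|)^{\alpha-3}$ via monotonicity, and then appeal directly to the classical Hardy-Littlewood-Sobolev inequality on $\mathbb{R}^3$; only a routine comparison of sums with integrals near the diagonal is then required.
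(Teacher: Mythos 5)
The paper does not actually prove Lemma \ref{lm1}: it is quoted as a known result with a pointer to \cite{LW,W1}, so there is no in-paper argument to compare against. Your sketch is a correct, self-contained route, and it is essentially the standard one in the cited literature: the duality reduction from (\ref{bo}) to (\ref{p1}) is exact (one checks $q'=s$ from $\tfrac1r+\tfrac1s+\tfrac{3-\alpha}{3}=2$), the bound $R_\alpha(x,y)\leq C(1+|x-y|)^{\alpha-3}$ does follow from boundedness at the diagonal plus the asymptotics in \cite{MC}, and this places $R_\alpha(\cdot,y)$ in $\ell^{p_0,\infty}$ with $p_0=\tfrac{3}{3-\alpha}$, after which the weak-type Young/O'Neil inequality (valid on any discrete abelian group, with $p_0,r>1$ and $q<\infty$ guaranteed by $1<r<\tfrac3\alpha$) gives (\ref{p1}). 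Two small points deserve care. First, in the converse duality step you should note that $\|R_\alpha\ast u\|_q$ is finite a priori for finitely supported $u$ (since $q>p_0$ and $\ell^{p_0,\infty}\hookrightarrow\ell^{q}$ on a space with atoms of measure one), so the division by $\|R_\alpha\ast u\|_q^{q-1}$ is legitimate before passing to general $u$ by density. Second, in your alternative route the phrase ``integrable majorant $(1+|x|)^{\alpha-3}$'' is a slip: this function is not summable for $\alpha\in(0,3)$; what the transference argument actually uses is that off the diagonal the discrete kernel is dominated by a constant multiple of the Riesz kernel $|x'-y'|^{\alpha-3}$ evaluated at points of the corresponding unit cubes, while the diagonal contribution $\sum_x R_\alpha(x,x)u(x)v(x)$ is handled separately by H\"older and the embedding $\ell^s\hookrightarrow\ell^{r'}$ (valid since $s<r'$ under the stated balance). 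With those clarifications the argument is complete; it is arguably more informative than the paper's treatment, which delegates the whole lemma to references.
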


Denote
$$I(u):=\frac{1}{2} \int_{\mathbb{Z}^3}(R_\alpha \ast F(u))F(u)\,d\mu, \quad u\in H.$$
Then for any $\phi\in H,$ we have
$$\langle I'(u), \phi\rangle=\int_{\mathbb{Z}^3}(R_\alpha \ast F(u))f(u)\phi\,d\mu.$$

\begin{lm}\label{lhj}
Let $(f_1)$-$(f_3)$ hold. Then 
\begin{itemize}
    \item[(i)] $I$ is weakly lower semicontinuous;
    \item[(ii)]  $I'$ is weakly continuous. 
\end{itemize}    
\end{lm}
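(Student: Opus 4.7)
The approach is to upgrade the weak convergence in $H$ to pointwise convergence on $\mathbb{Z}^{3}$ and then exploit the HLS inequality (Lemma~\ref{lm1}) together with the growth controls (\ref{ad})--(\ref{ae}). If $u_n\rightharpoonup u$ in $H$ then $\{u_n\}$ is $H$-bounded; for each fixed $x\in\mathbb{Z}^{3}$ the point evaluation $u\mapsto u(x)$ is a continuous linear functional on $H$ (using $(h_1)$ and (\ref{ac})), so $u_n(x)\to u(x)$ pointwise, and by (\ref{ac}) the sequence is uniformly bounded in $\ell^{q}$ for every $q\geq 2$. Continuity of $f$ from $(f_1)$ then gives $F(u_n)\to F(u)$ and $f(u_n)\to f(u)$ pointwise, while (\ref{ae}) combined with $p>(3+\alpha)/3$ yields $\sup_{n}\|F(u_n)\|_{6/(3+\alpha)}<\infty$.

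For (i), the key observation is that $(f_3)$ forces $F\geq 0$ and that $R_\alpha$ is strictly positive (no singularity), so every term in the double series
$$2I(u_n)=\sum_{x,y\in\mathbb{Z}^{3}}R_\alpha(x,y)F(u_n(x))F(u_n(y))$$
is nonnegative. Pointwise convergence of each summand together with Fatou's lemma for nonnegative double series immediately yields $I(u)\leq\liminf_{n\to\infty}I(u_n)$.

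For (ii), I first apply H\"older and HLS, writing $\|f(u_n)\phi\|_{6/(3+\alpha)}\leq\|f(u_n)\|_{6/\alpha}\|\phi\|_{2}$, to obtain the uniform operator-norm bound $\sup_{n}\|I'(u_n)\|_{H^{*}}<\infty$; the requirement $p>(3+\alpha)/3$ is exactly what makes $\|f(u_n)\|_{6/\alpha}$ bounded. Using the density of $C_c(\mathbb{Z}^{3})$ in $H$ (a routine shell-truncation argument), a standard three-$\varepsilon$ reduction restricts the problem to $\phi\in C_c(\mathbb{Z}^{3})$. For such $\phi$ supported in $B_{R}$, the pairing $\langle I'(u_n),\phi\rangle=\sum_{|x|\leq R}(R_\alpha\ast F(u_n))(x)f(u_n(x))\phi(x)$ is a finite sum, so it suffices to pass to the limit in each factor: $f(u_n(x))\to f(u(x))$ by continuity, and for the convolution I split
$$(R_\alpha\ast(F(u_n)-F(u)))(x)=\sum_{|y|\leq M}R_\alpha(x,y)(F(u_n(y))-F(u(y)))+\sum_{|y|>M}R_\alpha(x,y)(F(u_n(y))-F(u(y))),$$
in which the head vanishes in $n$ by pointwise convergence (a fixed finite sum) and the tail is controlled by H\"older using $R_\alpha(x,\cdot)\in\ell^{6/(3-\alpha)}$ (from the asymptotic $R_\alpha(x,y)\sim|x-y|^{\alpha-3}$) and the uniform $\ell^{6/(3+\alpha)}$-bound on $F(u_n)$, hence made small uniformly in $n$ by taking $M$ large.

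The main obstacle is the lack of compactness of the weak convergence: $u_n\rightharpoonup u$ in $H$ does \emph{not} imply $F(u_n)\to F(u)$ strongly in $\ell^{6/(3+\alpha)}$, as the runaway example $u_n=\delta_{x_n}$ with $|x_n|\to\infty$ shows, so a one-step HLS argument is unavailable. The remedy of first reducing to finitely supported test functions and then splitting the convolution into head and tail is exactly what sidesteps this obstruction.
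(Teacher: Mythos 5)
Your proof is correct, and part (ii) takes a genuinely different route from the paper's. For (i) both arguments are Fatou's lemma applied to the nonnegative double series (you supply the justification $F\ge 0$ from $(f_3)$ and $R_\alpha\ge 0$ explicitly, which the paper leaves implicit), and your observation that point evaluation is a bounded linear functional on $H$ gives pointwise convergence of the \emph{full} sequence, avoiding the paper's diagonal-subsequence extraction. For (ii) the paper also reduces to $\phi\in C_c(\mathbb{Z}^3)$, but then splits $\langle I'(u_n)-I'(u),\phi\rangle=T_1+T_2$ with $T_1=\int(R_\alpha\ast(F(u_n)-F(u)))f(u)\phi\,d\mu$ and $T_2=\int(R_\alpha\ast F(u_n))(f(u_n)-f(u))\phi\,d\mu$; it handles $T_1$ by upgrading the $\ell^{6/(3-\alpha)}$-boundedness of $R_\alpha\ast F(u_n)$ plus pointwise convergence to weak convergence in $\ell^{6/(3-\alpha)}$ and pairing against $f(u)\phi\in\ell^{6/(3+\alpha)}$, and $T_2$ by HLS plus the finiteness of $\mathrm{supp}(\phi)$. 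You instead exploit that the pairing is a finite sum over $\mathrm{supp}(\phi)$ and prove convergence of $(R_\alpha\ast F(u_n))(x)$ at each fixed $x$ by a head/tail split of the convolution series, with the tail controlled by $R_\alpha(x,\cdot)\in\ell^{6/(3-\alpha)}$ and the uniform $\ell^{6/(3+\alpha)}$ bound on $F(u_n)$. Your version is more elementary (no appeal to weak $\ell^q$ convergence and no subsequence bookkeeping, so the conclusion holds for the whole sequence directly), and you also make explicit the uniform bound $\sup_n\|I'(u_n)\|_{H^*}<\infty$ needed to legitimize the density reduction, which the paper does not state; the paper's version is shorter and reuses the HLS machinery already set up for the later compactness lemmas. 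One small caveat: your tail estimate uses the quantitative decay $R_\alpha(x,y)\lesssim|x-y|^{\alpha-3}$, which the paper only cites asymptotically, whereas the paper's route needs only the HLS inequality itself -- but since the paper states this asymptotic explicitly, this is a legitimate ingredient.
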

\begin{proof}
 Let $u_n\rightharpoonup u$ in $H$. Then $\{u_n\}$ is bounded in $H$, and hence bounded in $\ell^\infty(\mathbb{Z}^3)$. Therefore, by diagonal principle, there exists a subsequence of $\{u_n\}$ (still denoted by itself) such that  \begin{equation}\label{jm}
  u_n\rightarrow u,\quad \text{pointwise~in}~\mathbb{Z}^3.   
 \end{equation}
 
(i) By Fatou's lemma, we get that
 \begin{eqnarray*}
 I(u)=\frac{1}{2} \int_{\mathbb{Z}^3}(R_\alpha \ast F(u))F(u)\,d\mu\leq  \liminf_{n \rightarrow \infty}\frac{1}{2} \int_{\mathbb{Z}^3}(R_\alpha \ast F(u_n))F(u_n)\,d\mu= \liminf_{n \rightarrow \infty} I(u_n), 
 \end{eqnarray*}
 which implies that $I$ is weakly lower semicontinuous.
 
 \
\

(ii) Since $C_c(\mathbb{Z}^3)$ is dense in $H$, we only need to show that for any $\phi\in C_c(\mathbb{Z}^3)$,
 \begin{equation}\label{jn}
  \langle I'(u_n)-I'(u),\phi\rangle\rightarrow 0,\quad n\rightarrow\infty.   
 \end{equation}
 In fact, assume $\text{supp}(\phi)\subset B_r$ with $r>1$. Direct calculation yields that
 \begin{eqnarray*}
  \langle I'(u_n)-I'(u),\phi\rangle&=&\int_{\mathbb{Z}^3}(R_\alpha \ast \left(F(u_n)-F(u)\right)f(u)\phi\,d\mu\\&&+\int_{\mathbb{Z}^3}(R_\alpha \ast F(u_n))\left[f(u_n)-f(u)\right]\phi\,d\mu\\&=&T_1+T_2.   
 \end{eqnarray*}
By (\ref{ae}) and (\ref{ac}), one gets easily that $\{F(u_n)\}$ is bounded in $\ell^{\frac{6}{3+\alpha}}(\mathbb{Z}^3)$. Then it follows from the HLS inequality (\ref{p1}) that $\{(R_\alpha \ast F(u_n))\}$ is bounded in $\ell^{\frac{6}{3-\alpha}}(\mathbb{Z}^3)$. Moreover, we have $F(u_n)\rightarrow F(u)$ pointwise in $\mathbb{Z}^3$. Passing to a subsequence, we have
$$(R_\alpha \ast F(u_n))\rightharpoonup (R_\alpha \ast F(u)), \quad\text{in~}\ell^{\frac{6}{3-\alpha}}(\mathbb{Z}^3).$$ 
Since $f(u)\phi\in\ell^{\frac{6}{3+\alpha}}(\mathbb{Z}^3) $, we get that $$T_1\rightarrow 0,\quad n\rightarrow\infty.$$
By the HLS inequality (\ref{bo}) and (\ref{jm}), we obtain that
\begin{eqnarray*}
    T_2&\leq& C\|F(u_n)\|_{\frac{6}{3+\alpha}}\left(\int_{\mathbb{Z}^3}|(f(u_n)-f(u))\phi|^{\frac{6}{3+\alpha}}\,d\mu|\right)^{\frac{3+\alpha}{6}}\\&\leq& C\left(\int_{\mathbb{Z}^3}|(f(u_n)-f(u))\phi|^{\frac{6}{3+\alpha}}\,d\mu|\right)^{\frac{3+\alpha}{6}}\\&=&C\left(\int_{B_r}|(f(u_n)-f(u))\phi|^{\frac{6}{3+\alpha}}\,d\mu\right)^{\frac{3+\alpha}{6}}\\&\rightarrow& 0,\quad n\rightarrow\infty.
\end{eqnarray*}
Then (\ref{jn}) follows from $T_1,T_2\rightarrow 0$ as $n\rightarrow\infty.$
\end{proof}

For any $u\in H\backslash\{0\}$, let $$g(t):=I(tu)=\frac{1}{2}\int_{\mathbb{Z}^3}\left(R_\alpha \ast F\left(tu\right)\right)F\left(tu\right)\,d\mu,\quad t\geq 0.$$

\begin{lm}\label{lhf}
Let $(f_1)$-$(f_4)$ hold. Then
\begin{itemize}
    \item[(i)] for $t>0$, $\left[\frac{1}{4}t g'(t)-g(t)\right]$ is a positive and strictly increasing function;
    \item[(ii)] for $t\geq 1$, we have $g(t)\geq t^\theta g(1).$
\end{itemize}
\end{lm}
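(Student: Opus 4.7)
The plan is to express both quantities in terms of the function $\phi(t):=g'(t)/t^3$, which by hypothesis $(f_4)$ is strictly increasing on $(0,\infty)$, and then exploit that monotonicity directly. First, differentiating $g$ termwise (justified by the absolute convergence of the double sum defining $g$ together with the bounds (\ref{ad})--(\ref{ae}) and $u\in H\subset\ell^q(\mathbb{Z}^3)$ for all $q\ge 2$) and using the symmetry of $R_\alpha$, one obtains
\[
g'(t)=\int_{\mathbb{Z}^3}(R_\alpha\ast F(tu))\,f(tu)\,u\,d\mu,
\]
so $(f_4)$ says precisely that $\phi$ is strictly increasing on $(0,\infty)$.

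For part (i), note that $g(0)=0$ (since $F(0)=0$) and that $\phi$ is nonnegative (the integrand defining $g'$ is nonnegative by $(f_3)$ and positivity of $R_\alpha$) and increasing, hence bounded on every interval $(0,T]$; therefore $g(t)=\int_0^t g'(s)\,ds=\int_0^t s^3\phi(s)\,ds$. Since also $\tfrac{t}{4}g'(t)=\tfrac{t^4}{4}\phi(t)=\int_0^t s^3\phi(t)\,ds$, I would write the identity
\[
h(t):=\frac{1}{4}t g'(t)-g(t)=\int_0^t s^3\bigl[\phi(t)-\phi(s)\bigr]\,ds.
\]
Positivity of $h(t)$ for $t>0$ is immediate, since $\phi(t)>\phi(s)$ whenever $0<s<t$. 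For strict monotonicity, for $0<t_1<t_2$ I would split
\[
h(t_2)-h(t_1)=\int_0^{t_1}s^3\bigl[\phi(t_2)-\phi(t_1)\bigr]\,ds+\int_{t_1}^{t_2}s^3\bigl[\phi(t_2)-\phi(s)\bigr]\,ds,
\]
and observe that both summands are strictly positive.

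For part (ii), the goal is to show $g(t)/t^\theta$ is nondecreasing on $(0,\infty)$. By $(f_3)$, $f(s)s\ge \tfrac{\theta}{2}F(s)$ for every $s\in\mathbb{R}$, so multiplying pointwise by the nonnegative kernel $R_\alpha\ast F(tu)$ and summing yields
\[
tg'(t)=\int_{\mathbb{Z}^3}(R_\alpha\ast F(tu))\,f(tu)(tu)\,d\mu\ \ge\ \frac{\theta}{2}\int_{\mathbb{Z}^3}(R_\alpha\ast F(tu))\,F(tu)\,d\mu=\theta\,g(t).
\]
Hence $\frac{d}{dt}\bigl(g(t)/t^\theta\bigr)=t^{-\theta-1}\bigl(tg'(t)-\theta g(t)\bigr)\ge 0$, so $g(t)/t^\theta\ge g(1)$ for all $t\ge 1$, i.e.\ $g(t)\ge t^\theta g(1)$.

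The only mildly delicate step is justifying the termwise differentiation of $g$, which follows from Lemma \ref{lm1} applied to the majorants produced by (\ref{ad})--(\ref{ae}); beyond that, the argument is essentially a rearrangement that converts $(f_4)$ into the integral identity for $h$ and converts $(f_3)$ into the differential inequality $tg'(t)\ge\theta g(t)$, and no substantive obstacle is anticipated.
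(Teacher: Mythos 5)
Your proof is correct and follows essentially the same route as the paper: the identity $\frac{1}{4}tg'(t)-g(t)=\int_0^t s^3\bigl[\phi(t)-\phi(s)\bigr]\,ds$ with $\phi(t)=g'(t)/t^3$ is exactly the paper's device for converting $(f_4)$ into monotonicity, and the pointwise inequality $2f(s)s\ge\theta F(s)$ from $(f_3)$ yields the same differential inequality $tg'(t)\ge\theta g(t)$ that drives part (ii). The only cosmetic differences are that the paper gets positivity in (i) directly from $tg'(t)\ge\theta g(t)>4g(t)$ rather than from the integral identity, and proves (ii) by integrating $dg/g\ge\theta\,ds/s$ rather than by the monotonicity of $g(t)/t^\theta$; your variants are if anything slightly more careful, since you spell out the splitting of $h(t_2)-h(t_1)$ and avoid dividing by $g$.
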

\begin{proof}
 (i) For $t>0$, by $(f_3)$, we get that
\begin{eqnarray*}
 g'(t)&=&\langle I'(tu),u\rangle=\int_{\mathbb{Z}^3}\left(R_\alpha \ast F(tu)\right)f(tu)u\,d\mu \\&=&\frac{1}{t}\int_{\mathbb{Z}^3}\left(R_\alpha \ast F(tu)\right)f(tu)tu\,d\mu\\& \geq& \frac{\theta}{2t}\int_{\mathbb{Z}^3}\left(R_\alpha \ast F(tu)\right)F(tu)\,d\mu\\&=& \frac{\theta}{t} g(t)\\&>& \frac{4}{t} g(t),
\end{eqnarray*} 
which implies that $\frac{1}{4}t g'(t)-g(t)>0.$

 By $(f_4)$, one gets that $\frac{g'(t)}{t^3}$ is strictly increasing for  $t>0.$ Hence
\begin{eqnarray*}
    \frac{1}{4}t g'(t)-g(t)=\int_0^t\left(\frac{g'(t)}{t^3}-\frac{g'(s)}{s^3}\right)s^3\,ds
\end{eqnarray*}
strictly increases for $t>0$.

\
\

(ii) 
Clearly for $t=1$, the result holds.
From the proof of (i), one gets that $$g'(s)\geq \frac{\theta}{s}g(s),\quad s>0.$$
As a consequence, for $t>1$,
$$\int_1^{t} \frac{dg}{g}\geq \theta \int_1^{t} \frac{ds}{s},$$
which implies that
$$g(t)\geq  t^\theta g(1).$$

\end{proof}

Finally, we state some results about the compactness of $H$. The following one can be seen in \cite{ZZ}

\begin{lm}\label{lgg}
Let $(h_1)$ and $(h_2)$ hold. Then for any $q\geq 2$, $H$ is compactly embedded into $\ell^{q}\left(\mathbb{Z}^{3}\right)$. That is, there exists a constant $C$ such that, for any $u \in H$,
$$
\|u\|_{q} \leq C_q\|u\|.
$$
Furthermore, for any bounded sequence $\left\{u_{n}\right\} \subset H$, there exists $u \in H$ such that, up to a subsequence, 
$$
\begin{cases}u_{n} \rightharpoonup u, & \text { in } H, \\ u_{n}(x) \rightarrow u(x), & \text{pointwise~in}~ \mathbb{Z}^{3}, \\ u_{n} \rightarrow u, & \text { in } \ell^{q}\left(\mathbb{Z}^{3}\right) .\end{cases}
$$ 
\end{lm}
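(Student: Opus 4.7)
The plan is to establish the continuous embedding $\|u\|_q \le C_q\|u\|$ first, and then to extract a subsequence from a bounded sequence $\{u_n\}\subset H$ that converges weakly in $H$, pointwise on $\mathbb{Z}^3$, and strongly in $\ell^q(\mathbb{Z}^3)$ for every $q\ge 2$. The decisive ingredient for strong convergence is $(h_2)$, which lets us control $\ell^2$-mass outside a large ball.

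For the continuous embedding I would first note the elementary discrete nesting: for $q\ge 2$,
\[
\|u\|_q^q=\sum_{x\in\mathbb{Z}^3}|u(x)|^q\le \|u\|_\infty^{q-2}\|u\|_2^2\le \|u\|_2^{q-2}\|u\|_2^{2}=\|u\|_2^{q},
\]
so $\|u\|_q\le \|u\|_2$. Combining with $(h_1)$ gives $\|u\|_2^2\le V_0^{-1}\int V u^2\, d\mu\le V_0^{-1}\|u\|^2$, which proves the inequality with $C_q=V_0^{-1/2}$.

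Now take a bounded sequence $\{u_n\}\subset H$. Since $H$ is a Hilbert space, a subsequence satisfies $u_n\rightharpoonup u$ in $H$. The continuous embedding $H\hookrightarrow \ell^2(\mathbb{Z}^3)$ passes weak convergence to $\ell^2$, and weak convergence in $\ell^2$ against the test functions $\delta_x\in\ell^2$ yields pointwise convergence $u_n(x)\to u(x)$ on $\mathbb{Z}^3$. For strong $\ell^2$ convergence, fix $\varepsilon>0$ and use $(h_2)$ to choose $R>0$ so that $V(x)\ge 1/\varepsilon$ for $|x-x_0|>R$; then
\[
\sum_{|x-x_0|>R}|u_n(x)-u(x)|^2\le \varepsilon\sum_{|x-x_0|>R}V(x)|u_n(x)-u(x)|^2\le \varepsilon\, C\|u_n-u\|^2,
\]
which is uniformly $O(\varepsilon)$ since $\{u_n-u\}$ is bounded in $H$. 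On the finite set $B_R(x_0)$, pointwise convergence is $\ell^2$ convergence, so the interior sum tends to $0$. Letting $n\to\infty$ and then $\varepsilon\to 0$ yields $u_n\to u$ in $\ell^2$.

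For general $q\ge 2$, interpolate:
\[
\|u_n-u\|_q^q\le \|u_n-u\|_\infty^{q-2}\|u_n-u\|_2^2\le \bigl(C\sup_n\|u_n-u\|\bigr)^{q-2}\|u_n-u\|_2^2\to 0.
\]
The main obstacle in this plan is the tail estimate; everything else is a standard Hilbert space/pointwise-limit argument, whereas the uniform smallness of $\sum_{|x-x_0|>R}u_n^2$ genuinely requires $(h_2)$ and is what distinguishes this compact embedding from the merely continuous embedding $H^1(\mathbb{Z}^3)\hookrightarrow \ell^2(\mathbb{Z}^3)$ one has on the lattice.
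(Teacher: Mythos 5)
Your proof is correct and complete; the paper itself does not prove this lemma but simply cites [ZZ] (Zhang--Zhao), and the argument there is exactly the standard one you give: continuous embedding via $\|u\|_q\le\|u\|_2\le V_0^{-1/2}\|u\|$, weak convergence plus testing against $\delta_x$ for pointwise convergence, the $(h_2)$-tail estimate $\sum_{|x-x_0|>R}|u_n-u|^2\le\varepsilon\|u_n-u\|^2$ combined with convergence on the finite ball for strong $\ell^2$ convergence, and interpolation for general $q$. No gaps.
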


We also present a discrete Lions lemma, which
denies a sequence $\left\{u_{n}\right\}$ to distribute itself over $\mathbb{Z}^3.$

\begin{lm}\label{lgh}
 Let $2\leq s<\infty$. Assume that $\left\{u_{n}\right\}$ is bounded in $H$ and

$$
\left\|u_{n}\right\|_{\infty} \rightarrow 0,\quad n \rightarrow\infty \text {. }
$$
Then, for any $s<t<\infty$,
$$
u_{n} \rightarrow 0,\quad  \text { in } \ell^{t}\left(\mathbb{Z}^{3}\right) \text {. }
$$   
\end{lm}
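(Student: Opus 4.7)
The plan is to exploit the elementary but crucial fact that on the discrete space $\mathbb{Z}^3$ (with counting measure) the $\ell^p$ spaces are nested, namely $\ell^p(\mathbb{Z}^3) \hookrightarrow \ell^q(\mathbb{Z}^3)$ for $p \leq q$, and moreover $\|u\|_\infty \leq \|u\|_p$ for every $p < \infty$. Consequently the classical Lions lemma, which in the continuous setting requires a non-concentration condition on small balls, collapses to a one-line interpolation estimate in our setting.

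First I would observe that since $\{u_n\}$ is bounded in $H$, the embedding recalled in \eqref{ac} (or more generally in Lemma \ref{lgg}) yields a constant $M>0$ such that $\|u_n\|_s \leq M$ for all $n$. This takes care of controlling the $\ell^s$ norm uniformly.

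Next, for any $t$ with $s < t < \infty$ and any $x \in \mathbb{Z}^3$, I would use the pointwise bound
\begin{equation*}
|u_n(x)|^t = |u_n(x)|^{t-s}\,|u_n(x)|^s \leq \|u_n\|_\infty^{\,t-s}\,|u_n(x)|^s.
\end{equation*}
Summing over $x \in \mathbb{Z}^3$ gives
\begin{equation*}
\|u_n\|_t^{\,t} \leq \|u_n\|_\infty^{\,t-s}\,\|u_n\|_s^{\,s} \leq M^s\,\|u_n\|_\infty^{\,t-s}.
\end{equation*}
Since $t-s > 0$ and $\|u_n\|_\infty \to 0$ by hypothesis, the right-hand side tends to zero, which yields $u_n \to 0$ in $\ell^t(\mathbb{Z}^3)$.

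There is essentially no real obstacle here: the whole point of the statement is that in the discrete setting one avoids the covering argument and the use of Sobolev embeddings over small balls that make the continuous Lions lemma subtle. The only thing to be careful about is verifying that the boundedness in $H$ really does give boundedness in $\ell^s$ for the given exponent $s \geq 2$, which is precisely what \eqref{ac} (or Lemma \ref{lgg}) provides.
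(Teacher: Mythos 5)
Your proof is correct and follows essentially the same route as the paper: boundedness in $\ell^s$ via \eqref{ac}, then the interpolation inequality $\|u_n\|_t^t \leq \|u_n\|_s^s\,\|u_n\|_\infty^{t-s}$. You merely spell out the pointwise derivation of that inequality, which the paper takes for granted.
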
 

\begin{proof}
 By (\ref{ac}), we get that $\{u_n\}$ is bounded in $\ell^{s}\left(\mathbb{Z}^{3}\right)$. Hence, for $s<t<x\infty$, this result follows from an interpolation inequality
$$
\left\|u_{n}\right\|_{t}^{t} \leq\left\|u_{n}\right\|_{s}^{s}\left\|u_{n}\right\|_{\infty}^{t-s} .
$$
 
\end{proof}

\section{Proof of Theorem \ref{t0}}
In this section, we prove the existence of nontrivial solutions to the equation (\ref{aa}) by the mountain pass theorem. First we show that
 the functional $J(u)$ satisfies the mountain pass geometry.
\begin{lm}\label{lm}
  Let $(h_1)$ and $(f_1)$-$(f_3)$ hold. Then
\begin{itemize}
    \item[(i)] there exist $\sigma, \rho>0$ such that $J(u) \geq \sigma>0$ for all $\|u\|=\rho$;
    \item[(ii)] there exists $e \in H$ with $\|e\|>\rho$ such that $J(e)< 0$.  
\end{itemize}  
\end{lm}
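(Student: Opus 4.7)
The plan is to verify the two pieces of mountain-pass geometry separately. For part (i) I work at small $\|u\|$ using the HLS inequality (Lemma \ref{lm1}) together with the continuous embedding (\ref{ac}); for part (ii) I exploit the super-quartic growth of $F$ forced by $(f_3)$ (with $\theta>4$) to overwhelm both the quadratic and the Kirchhoff quartic terms along a ray emanating from the origin.

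For (i), I would start from
$$J(u)\geq \tfrac12\|u\|^2-\tfrac12\int_{\mathbb{Z}^3}(R_\alpha\ast F(u))F(u)\,d\mu,$$
discarding the non-negative Kirchhoff term, and then apply Lemma \ref{lm1} with $r=s=\tfrac{6}{3+\alpha}$, which is admissible since $\alpha\in(0,3)$ gives $\tfrac{1}{r}+\tfrac{1}{s}+\tfrac{3-\alpha}{3}=2$. This yields $\int(R_\alpha\ast F(u))F(u)\,d\mu\leq C\|F(u)\|_{\frac{6}{3+\alpha}}^2$. Plugging in the pointwise bound (\ref{ae}), using Minkowski in $\ell^{\frac{6}{3+\alpha}}$ (the exponent is $\ge 1$ because $\alpha<3$), and finally invoking (\ref{ac}) at the exponents $\tfrac{12}{3+\alpha}$ and $\tfrac{6p}{3+\alpha}$ (both $\ge 2$ since $\alpha<3$ and $p>\tfrac{3+\alpha}{3}$), I obtain $\|F(u)\|_{\frac{6}{3+\alpha}}\leq C(\varepsilon\|u\|^2+C_\varepsilon\|u\|^p)$. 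Consequently
$$J(u)\geq \|u\|^2\Bigl(\tfrac12-C'\varepsilon^2\|u\|^2-C'_\varepsilon\|u\|^{2p-2}\Bigr),$$
and since both excess powers $2$ and $2p-2$ are strictly positive, fixing $\varepsilon$ small and then $\|u\|=\rho$ small makes the bracket positive, giving the required $\sigma>0$.

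For (ii), I would fix some $u_0\in C_c(\mathbb{Z}^3)\setminus\{0\}$ with $u_0(x_0)>0$ at a point $x_0$ and study $J(tu_0)$ as $t\to\infty$. By $(f_3)$ the function $|t|^{-\theta/2}F(t)$ is non-decreasing on each half-line (this is just $(|t|^{-\theta/2}F(t))'=|t|^{-\theta/2-1}(tf(t)-\tfrac{\theta}{2}F(t))\geq 0$ by $(f_3)$); and $F\not\equiv 0$, since otherwise $f\equiv 0$ and the problem is trivial, so I may fix $s_*>0$ with $F(s_*)>0$. Therefore $F(tu_0(x_0))\geq F(s_*)(tu_0(x_0)/s_*)^{\theta/2}$ for all sufficiently large $t$. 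Retaining only the diagonal $x=y=x_0$ term in the double sum and using the positivity $R_\alpha(x_0,x_0)>0$ (which follows from the kernel representation and the asymptotic $R_\alpha\sim|x-y|^{\alpha-3}>0$), I conclude
$$\int_{\mathbb{Z}^3}(R_\alpha\ast F(tu_0))F(tu_0)\,d\mu\geq R_\alpha(x_0,x_0)F(tu_0(x_0))^2\geq C_0\,t^{\theta}.$$
Since the remaining terms contribute only $\tfrac{t^2}{2}\|u_0\|^2+\tfrac{bt^4}{4}(\int|\nabla u_0|^2\,d\mu)^2=O(t^4)$ and $\theta>4$, we get $J(tu_0)\to-\infty$. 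Choosing $t_0$ large so that $J(t_0u_0)<0$ and $\|t_0u_0\|>\rho$, we set $e=t_0u_0$.

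Most of the work is routine; the only subtlety is bookkeeping the many HLS/Sobolev exponents in (i) so that everything is admissible simultaneously ($\tfrac{6}{3+\alpha}\in[1,2)$, $\tfrac{12}{3+\alpha}\geq 2$, $\tfrac{6p}{3+\alpha}\geq 2$), and, in (ii), producing a concrete $u_0$ on which the nonlocal term is strictly positive. The latter is handled by concentrating $u_0$ at a single vertex, which reduces positivity of the double sum to the positivity $F(s_*)R_\alpha(x_0,x_0)>0$.
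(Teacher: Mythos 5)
Your proposal is correct. Part (i) is essentially the paper's argument: drop the Kirchhoff term, apply the HLS inequality with $r=s=\tfrac{6}{3+\alpha}$ and the growth bound (\ref{ae}) to get $\int(R_\alpha\ast F(u))F(u)\,d\mu\le \varepsilon\|u\|^4+C_\varepsilon\|u\|^{2p}$, and conclude from the fact that both competing powers exceed $2$. Part (ii) reaches the same conclusion by a slightly different route. The paper invokes its Lemma \ref{lhf}(ii), i.e.\ the integrated inequality $g(t)\ge t^\theta g(1)$ for $g(t)=\tfrac12\int(R_\alpha\ast F(tu))F(tu)\,d\mu$, and then lets $t\to\infty$; you instead derive the pointwise growth $F(ts)\ge (t)^{\theta/2}F(s)$ directly from $(f_3)$ and test with a function concentrated at a single vertex, bounding the double sum from below by its diagonal term $R_\alpha(x_0,x_0)F(tu_0(x_0))^2\gtrsim t^\theta$. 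Your version buys something the paper's write-up quietly skips: the paper's displayed limit is only $-\infty$ if $\int(R_\alpha\ast F(u))F(u)\,d\mu>0$, which can fail for a given $u\in H\setminus\{0\}$ when $F$ vanishes on a neighbourhood of the origin (also, the ODE derivation of $g(t)\ge t^\theta g(1)$ divides by $g$); your single-vertex test function with $F(s_*)>0$ makes the strict positivity of the $t^\theta$ coefficient explicit. Two minor caveats in your argument, neither fatal: you should allow $s_*<0$ (and then take $u_0(x_0)<0$), since $(f_3)$ only guarantees $F\ge 0$ and $F\not\equiv 0$ somewhere on $\mathbb{R}$, not necessarily on $(0,\infty)$; and discarding the off-diagonal terms uses $R_\alpha\ge 0$ everywhere, which, like the paper's own repeated use of this positivity, is an implicit standing assumption on the Green's function rather than something proved in the text.
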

\begin{proof}
  (i)
By (\ref{ae}) and the HLS inequality (\ref{bo}), we get that
\begin{eqnarray}\label{uc}
    \int_{\mathbb{Z}^3}(R_\alpha \ast F(u))F(u)\,d\mu\nonumber&\leq& C\left(\int_{\mathbb{Z}^3} |F(u_n)|^{\frac{6}{3+\alpha}}\,d\mu  \right)^{\frac{3+\alpha}{3}}\\&\leq &C\left(\int_{\mathbb{Z}^3} \left(\varepsilon|u|^2+C_\varepsilon|u|^{p}\right)^{\frac{6}{3+\alpha}}\,d\mu  \right)^{\frac{3+\alpha}{3}}\nonumber\\&\leq& \varepsilon\|u\|^4_{\frac{12}{3+\alpha}}+C_\varepsilon\|u\|^{2p}_{\frac{6p}{3+\alpha}}\nonumber\\&\leq& \varepsilon\|u\|^4+C_\varepsilon\|u\|^{2p}.
\end{eqnarray}
Then 
\begin{align*}
J(u) & =\frac{1}{2}\|u\|^2+\frac{b}{4}\left(\int_{\mathbb{Z}^3}|\nabla u|^{2} d \mu\right)^{2}-\frac{1}{2}  \int_{\mathbb{Z}^3}(R_\alpha \ast F(u))F(u)\,d\mu\\
& \geq \frac{1}{2}\|u\|^2-\frac{1}{2}  \int_{\mathbb{Z}^3}(R_\alpha \ast F(u))F(u)\,d\mu \\
& \geq \frac{1}{2}\|u\|^2-\varepsilon\|u\|^4-C_\varepsilon\|u\|^{2p}.
\end{align*}
Note that $p>\frac{3+\alpha}{3}>1$. Let $\varepsilon\rightarrow 0^+$,  then there exist $\sigma, \rho>0$ small enough such that $J(u) \geq \sigma>0$ for $\|u\|=\rho$.

\
\

(ii) Let $u\in H\backslash\{0\}$. Then if follows from Lemma \ref{lhf} (ii), (\ref{uc}) and $\theta>4$ that
\begin{eqnarray}\label{ud}
 \lim _{t \rightarrow\infty} J(t u)\nonumber&=&\lim _{t \rightarrow\infty}\left[\frac{t^{2}}{2}\|u\|^2+\frac{b t^{4}}{4}\left(\int_{\mathbb{Z}^3}|\nabla u|^{2} d \mu\right)^{2}-\frac{1}{2} \int_{\mathbb{Z}^3}(R_\alpha \ast F(tu))F(tu)\,d\mu\right]\nonumber\\&\leq&\lim _{t \rightarrow\infty}\left[\frac{t^{2}}{2}\|u\|^2+\frac{b t^{4}}{4}\left(\int_{\mathbb{Z}^3}|\nabla u|^{2} d \mu\right)^{2}-\frac{t^\theta}{2} \int_{\mathbb{Z}^3}\left(R_\alpha \ast F(u)\right)F(u)\,d\mu\right]\nonumber\\&\rightarrow&-\infty.   
\end{eqnarray}
Hence, we can choose $t_{0}>0$ large enough such that $\left\|e\right\|>\rho$ with $e=t_{0} u$ and $J\left(e\right)<0$. 
\end{proof}

In the following, we prove the compactness of Palais-Smale sequence. Recall that, for a given functional $\Phi\in C^{1}(X,\mathbb{R})$, where $X$ is a Banach space, a sequence $\{u_n\}\subset X$ is a Palais-Smale sequence at level $c\in\mathbb{R}$, $(PS)_c$ sequence for short, of the functional $\Phi$, if it satisfies, as $n\rightarrow\infty$,
\begin{eqnarray*}
\Phi(u_n)\rightarrow c, \qquad \text{in}~ X,\qquad\text{and}\qquad
\Phi'(u_n)\rightarrow 0, \qquad \text{in}~X^*
\end{eqnarray*}
where $X^{*}$ is the dual space of $X$. Moreover, we say that $\Phi$ satisfies $(PS)_c$ condition, if any $(PS)_c$ sequence has a convergent subsequence. 

\begin{lm}\label{ln}
Let $(h_1),(h_2)$ and $(f_1)$-$(f_3)$ hold. Then $J$ satisfies the $(PS)_c$ condition with $c\in\mathbb{R}.$ 
\end{lm}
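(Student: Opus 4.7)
\medskip
\noindent\textbf{Proof proposal.} The plan is to verify $(PS)_c$ in three steps: boundedness of the sequence, extraction of a weak limit via the compact embedding of Lemma \ref{lgg}, and an upgrade to strong convergence by an inner-product identity that isolates $\|u_n-u\|^2$ modulo a non-negative Kirchhoff remainder and a vanishing Choquard remainder.

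First, to get boundedness I would compute the standard combination
\begin{equation*}
J(u_n) - \tfrac{1}{\theta}\langle J'(u_n),u_n\rangle = \bigl(\tfrac{1}{2}-\tfrac{1}{\theta}\bigr)\|u_n\|^2 + b\bigl(\tfrac{1}{4}-\tfrac{1}{\theta}\bigr)\Bigl(\int_{\mathbb{Z}^3}|\nabla u_n|^2 d\mu\Bigr)^{\!2} + \int_{\mathbb{Z}^3}(R_\alpha\ast F(u_n))\Bigl[\tfrac{1}{\theta}f(u_n)u_n-\tfrac{1}{2}F(u_n)\Bigr] d\mu.
\end{equation*}
Condition $(f_3)$ gives $\tfrac{1}{2}F(t)\leq\tfrac{1}{\theta}f(t)t$ and $F\geq 0$, so the last integral is non-negative; since $\theta>4$, both bracketed coefficients are positive. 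The left side is dominated by $c+1+\tfrac{1}{\theta}\|J'(u_n)\|_{H^*}\|u_n\|\leq C(1+\|u_n\|)$, hence $\|u_n\|$ is bounded.

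Next I would apply Lemma \ref{lgg} to extract a subsequence with $u_n\rightharpoonup u$ in $H$, $u_n\to u$ pointwise on $\mathbb{Z}^3$, and $u_n\to u$ in $\ell^q(\mathbb{Z}^3)$ for every $q\geq 2$. The estimate \eqref{ac} gives $\|u_n-u\|_\infty\leq\|u_n-u\|_2\to 0$, and passing to a further subsequence, $A_n^2:=\|\nabla u_n\|_2^2\to A^2$ for some $A\geq 0$. The key algebraic observation, obtained by expanding $\|\nabla(u_n-u)\|_2^2$ and relating the cross term to $A_n^2+A^2$, is
\begin{equation*}
A_n^2\!\!\int_{\mathbb{Z}^3}\!\nabla u_n\nabla(u_n-u)d\mu - A^2\!\!\int_{\mathbb{Z}^3}\!\nabla u\nabla(u_n-u)d\mu = \tfrac{1}{2}\bigl[(A_n^2-A^2)^2 + (A_n^2+A^2)\|\nabla(u_n-u)\|_2^2\bigr]\geq 0.
\end{equation*}
Subtracting $\langle J'(u), u_n-u\rangle$ from $\langle J'(u_n), u_n-u\rangle$ and using this identity yields
\begin{equation*}
\|u_n-u\|^2 \leq \langle J'(u_n)-J'(u), u_n-u\rangle + \int_{\mathbb{Z}^3}\bigl[(R_\alpha\ast F(u_n))f(u_n) - (R_\alpha\ast F(u))f(u)\bigr](u_n-u)d\mu.
\end{equation*}
Since $J'(u_n)\to 0$ in $H^*$, $J'(u)\in H^*$, and $u_n-u\rightharpoonup 0$ in $H$, the first term on the right vanishes as $n\to\infty$.

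The main obstacle is showing the Choquard remainder tends to zero. I would split it into $\int(R_\alpha\ast F(u_n))f(u_n)(u_n-u)d\mu$ and $\int(R_\alpha\ast F(u))f(u)(u_n-u)d\mu$. The second integrand is fixed against a sequence tending to zero pointwise and in every $\ell^q$, $q\geq 2$, and by Lemma \ref{lhj}(ii) adapted to test against $u_n-u$ truncated outside a large ball, a tail estimate via HLS \eqref{bo} together with the uniform $\ell^{6/(3+\alpha)}$-bound on $f(u)$ closes this term. For the first piece, HLS gives the bound $C\|F(u_n)\|_{6/(3+\alpha)}\|f(u_n)(u_n-u)\|_{6/(3+\alpha)}$; the first factor is bounded by \eqref{ae} and \eqref{ac}, so the task reduces to proving $\|f(u_n)(u_n-u)\|_{6/(3+\alpha)}\to 0$. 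For this I would fix $\varepsilon>0$, choose $\delta>0$ via $(f_1)$ so that $|f(t)|\leq\varepsilon|t|$ when $|t|<\delta$, and split $\mathbb{Z}^3$ by $\{|u_n|<\delta\}$ and $\{|u_n|\geq\delta\}$. On the first region, a Cauchy--Schwarz-type bound against $\|u_n-u\|_{12/(3+\alpha)}\to 0$ controls the contribution by $\varepsilon$ times a bounded quantity. On the second region, the $\ell^2$-bound forces $\#\{|u_n|\geq\delta\}\leq\|u_n\|_2^2/\delta^2\leq N$, and $\|u_n-u\|_\infty\to 0$ implies that for large $n$ this finite set lies inside the finite set $\{|u|\geq\delta/2\}$, on which pointwise convergence of $u_n-u$ to zero, combined with the $\ell^\infty$-bound on $f(u_n)$, makes the sum vanish. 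Letting $\varepsilon\to 0$ concludes. With this in hand, $\|u_n-u\|^2\to 0$, so a convergent subsequence exists in $H$ and the $(PS)_c$ condition is verified.
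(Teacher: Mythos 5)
Your proof is correct, and its skeleton (boundedness via $(f_3)$ with $\theta>4$, extraction of a weak limit via Lemma \ref{lgg}, strong convergence by testing the derivative against $u_n-u$) matches the paper's; the genuine divergence is in how the Kirchhoff and Choquard terms are dispatched in the last step. The paper tests only $J'(u_n)$ against $u_n-u$ and shows that the Kirchhoff cross term $b\|\nabla u_n\|_2^2\int_{\mathbb{Z}^3}\nabla u_n\nabla(u_n-u)\,d\mu$ actually \emph{vanishes}, using the graph-specific bound $\|\nabla v\|_2^2\le C\|v\|_2^2$ together with $\|u_n-u\|_2\to 0$ from the compact embedding; this yields $(u_n,u_n-u)\to 0$, and $(u,u_n-u)\to 0$ follows from weak convergence. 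You instead subtract $\langle J'(u),u_n-u\rangle$ and discard the Kirchhoff contribution as non-negative via the convexity identity $\|v\|^2\langle v,v-w\rangle-\|w\|^2\langle w,v-w\rangle=\tfrac12\bigl[(\|v\|^2-\|w\|^2)^2+(\|v\|^2+\|w\|^2)\|v-w\|^2\bigr]$ applied to $v=\nabla u_n$, $w=\nabla u$. That identity is correct, but note it requires $A^2$ to mean $\|\nabla u\|_2^2$ (the coefficient appearing in $\langle J'(u),\cdot\rangle$), not $\lim_n\|\nabla u_n\|_2^2$ as you define it; in the present compact setting the two coincide precisely because $\|\nabla(u_n-u)\|_2\to 0$, so nothing breaks, but the definition and the use should be reconciled. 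Your route is the more robust one -- it is what one would need absent a compact embedding, as in the periodic case of Theorem \ref{t2} -- at the price of requiring $J'(u)\in H^*$ and a heavier treatment of the Choquard remainder: the $\delta$-splitting into $\{|u_n|<\delta\}$ and $\{|u_n|\ge\delta\}$ is valid but unnecessary here, since the HLS inequality (\ref{bo}) and the growth condition (\ref{ad}) give
$\|f(u_n)(u_n-u)\|_{\frac{6}{3+\alpha}}\le C\|u_n\|_{\frac{12}{3+\alpha}}\|u_n-u\|_{\frac{12}{3+\alpha}}+C\|u_n\|_{\frac{6p}{3+\alpha}}^{p-1}\|u_n-u\|_{\frac{6p}{3+\alpha}}\to 0$
directly from Lemma \ref{lgg} (both exponents are $\ge 2$), which is how the paper closes the argument in one line.
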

\begin{proof}
For any $c\in\mathbb{R}$, let $\left\{u_{n}\right\}$ be a $(P S)_{c}$ sequence for $J(u)$, namely
\begin{equation}\label{ag}
 J\left(u_{n}\right)=c+o_{n}(1), \quad \text { and } \quad J^{\prime}\left(u_{n}\right)=o_{n}(1),
 \end{equation}
 where $o_n(1)\rightarrow 0$ as $n\rightarrow\infty.$
 
Note that $\theta>4$ and $b>0$. By (\ref{ag}), we get that
\begin{eqnarray}\label{ba}
\|u_n\|^2\nonumber&=&\int_{\mathbb{Z}^3}(R_\alpha \ast F(u_n))F(u_n)\,d\mu-\frac{b}{2}\left(\int_{\mathbb{Z}^3}\left|\nabla u_{n}\right|^{2} d \mu\right)^{2}+2c+o_n(1)\nonumber\\&\leq& \frac{2}{\theta}\int_{\mathbb{Z}^3}(R_\alpha \ast F(u_n))f(u_n)u_n\,d\mu-\frac{b}{2}\left(\int_{\mathbb{Z}^3}\left|\nabla u_{n}\right|^{2} d \mu\right)^{2}+2c+o_n(1)\nonumber\\&\leq&\frac{1}{2} \left(\|u_n\|^2+b\left(\int_{\mathbb{Z}^3}\left|\nabla u_{n}\right|^{2} d \mu\right)^{2}+o_n(1)\|u_n\|\right)-\frac{b}{2}\left(\int_{\mathbb{Z}^3}\left|\nabla u_{n}\right|^{2} d \mu\right)^{2}+2c+o_n(1)\nonumber\\&=&\frac{1}{2}\|u_n\|^2+o_n(1)\|u_n\|+2c+o_n(1),
\end{eqnarray}
which implies that $\{u_n\}$ is bounded in $H$. Then by Lemma \ref{lgg}, up to a subsequence, there exists $u \in H$ such that
\begin{equation}\label{ua}
\begin{cases}u_{n} \rightharpoonup u, & \text { in } H, \\ u_{n}(x) \rightarrow u(x), & \text{pointwise~in~} \mathbb{Z}^{3}, \\ u_{n} \rightarrow u, & \text { in } \ell^{q}\left(\mathbb{Z}^{3}\right),q\geq 2.\end{cases}
\end{equation}
Since $|\nabla u(x)|^2=\frac{1}{2}\underset {y\sim x}{\sum}(u(y)-u(x))^{2}$, one gets easily that$$\int_{\mathbb{Z}^3}|\nabla u|^2\,d\mu\leq C\| u \|^2_2.$$
Hence by H\"{o}lder inequality, the boundedness of $\{u_n\}$ and (\ref{ua}), we have
\begin{eqnarray}\label{ak}
    \int_{\mathbb{Z}^3} |\nabla u_n| |\nabla (u_n-u)|\,d\mu\nonumber&\leq&\left(\int_{\mathbb{Z}^3} |\nabla u_n|^2\,d\mu\right)^{\frac{1}{2}}\left(\int_{\mathbb{Z}^3} |\nabla (u_n-u)|^2\,d\mu\right)^{\frac{1}{2}}\nonumber\\&\leq&C\|u_n\|\|u_n-u\|_2\nonumber\\&\rightarrow& 0,\quad n\rightarrow\infty.
\end{eqnarray}
Moreover, by the HLS inequality (\ref{bo}), H\"{o}lder inequality, the boundedness of $\{u_n\}$ and (\ref{ua}), we have
\begin{eqnarray}\label{ub}
|\langle I'(u_n),u_n-u\rangle|\nonumber&\leq&\int_{\mathbb{Z}^3}(R_\alpha \ast F(u_n))|f(u_n)(u_n-u)|\nonumber\,d\mu
\\&\leq& C\left(\int_{\mathbb{Z}^3} |F(u_n)|^{\frac{6}{3+\alpha}}\,d\mu  \right)^{\frac{3+\alpha}{6}}\left(\int_{\mathbb{Z}^3} |f(u_n)(u_n-u)|^{\frac{6}{3+\alpha}}\,d\mu  \right)^{\frac{3+\alpha}{6}}\nonumber\\&\leq&C\left(\int_{\mathbb{Z}^3} (|u_n|^2+|u_n|^p)^{\frac{6}{3+\alpha}}\,d\mu  \right)^{\frac{3+\alpha}{6}}\left(\int_{\mathbb{Z}^3} \left[(|u_n|+|u_n|^{p-1})|u_n-u|\right]^{\frac{6}{3+\alpha}}\,d\mu  \right)^{\frac{3+\alpha}{6}}\nonumber\\&\leq& C\left[\left(\int_{\mathbb{Z}^3} (|u_n||u_n-u|)^{\frac{6}{3+\alpha}}\,d\mu  \right)^{\frac{3+\alpha}{6}}+\left(\int_{\mathbb{Z}^3} (|u_n|^{p-1}|u_n-u|)^{\frac{6}{3+\alpha}}\,d\mu  \right)^{\frac{3+\alpha}{6}}\right]\nonumber\\&\leq& C\|u_n\|_{\frac{12}{3+\alpha}}\|u_n-u\|_{\frac{12}{3+\alpha}}+C\|u_n\|^{p-1}_{\frac{6p}{3+\alpha}}\|u_n-u\|_{\frac{6p}{3+\alpha}}\nonumber\\&\leq& C\|u_n-u\|_{\frac{12}{3+\alpha}}+C\|u_n-u\|_{\frac{6p}{3+\alpha}}\nonumber\\&\rightarrow&0,\quad n\rightarrow\infty.
\end{eqnarray}
Then if follows from (\ref{ag}), (\ref{ak}) and (\ref{ub}) that
\begin{eqnarray*}
    |(u_n,u_n-u)|&\leq& |\langle J'(u_n),u_n-u\rangle|+b\int_{\mathbb{Z}^3}|\nabla u_n|^{2} d \mu\int_{\mathbb{Z}^3}|\nabla u_n||\nabla (u_n-u)|\,d \mu+|\langle I'(u_n),u_n-u\rangle|\nonumber\\&\leq&o_n(1)\|u_n-u\|+Cb\|u_n\|^2\int_{\mathbb{Z}^3}|\nabla u_n||\nabla (u_n-u)|\,d \mu+|\langle I'(u_n),u_n-u\rangle|\nonumber\\&\rightarrow& 0,\quad n\rightarrow\infty.
\end{eqnarray*}
Furthermore, since $u_n\rightharpoonup u$ in $H$, we have
$$(u,u_n-u)\rightarrow 0,\quad n\rightarrow\infty.$$
Hence 
$$\|u_n-u\|\rightarrow 0,\quad n\rightarrow\infty.$$
Note that $u_n\rightarrow u$ pointwise in $\mathbb{Z}^3$, we obtain $u_n\rightarrow u$ in $H.$

\end{proof}

{\bf Proof of Theorem \ref{t0} :} By Lemma \ref{lm}, one sees that $J$ satisfies the geometric structure of the
mountain pass theorem. Hence there exists a $(PS)_c$ sequence with $c=\inf\limits_{\gamma\in\Gamma}\max\limits_{t\in [0,1]} J(\gamma(t))$, 
where 
$$\Gamma=\{\gamma\in C([0,1],H):\gamma(0)=0, \gamma(1)=e\}.$$ By Lemma \ref{ln}, $J$ satisfies the $(PS)_c$ condition. Then $c$ is a critical value of $J$ by the mountain pass theorem due to Ambrosetti-Rabinowitz \cite{W0}. In particular, there exists $u\in H$ such that $J(u)=c$. Since $J(u)=c\geq\sigma>0$, we have $u\neq 0$. Hence the equation (\ref{aa}) possesses at least a nontrivial solution.\qed

\section{Proof of Theorem \ref{t1}}

In this section, we prove the existence of ground state solutions to the equation  (\ref{aa}) under the  conditions $(h_1)$ and $(h_2)$ on $V$. Now we show some properties of $J$ on $\mathcal{N}$ that are useful in our proofs.

\begin{lm}\label{lgj}
 Let $\left(h_{1}\right)$ and $\left(f_{1}\right)$-$\left(f_{4}\right)$ hold. Then 
\begin{itemize}
    \item[(i)] for any $u \in H \backslash\{0\}$, there exists a unique $s_{u}>0$ such that $s_{u} u \in \mathcal{N}$ and $J(s_{u} u)=$ $\max\limits_{s>0} J(s u)$;
    
    \item[(ii)] there exists $\eta>0$ such that $\|u\| \geq \eta$ for  $u \in \mathcal{N}$;

    \item[(iii)] $J$ is bounded from below on $\mathcal{N}$ by a positive constant.


\end{itemize}
  
\end{lm}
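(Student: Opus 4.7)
The plan is to study the fibering map $\phi_u(s):=J(su)$ for a fixed $u\in H\setminus\{0\}$, $s>0$. A direct computation gives
$$\phi_u'(s)=s\|u\|^2+bs^3\Bigl(\int_{\mathbb{Z}^3}|\nabla u|^2\,d\mu\Bigr)^2-g'(s),$$
with $g$ as defined before Lemma \ref{lhf}, so $s_u u\in\mathcal{N}$ if and only if $\phi_u'(s_u)=0$. Dividing by $s^3$, I would write
$$\frac{\phi_u'(s)}{s^3}=\frac{\|u\|^2}{s^2}+b\Bigl(\int_{\mathbb{Z}^3}|\nabla u|^2\,d\mu\Bigr)^2-\frac{g'(s)}{s^3}.$$
By hypothesis $(f_4)$ the last term is strictly decreasing, and the first term is obviously strictly decreasing, so $\phi_u'(s)/s^3$ is strictly decreasing on $(0,\infty)$.

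For (i), I would then check the two endpoint limits. As $s\to 0^+$, combining (\ref{ad}), (\ref{ae}) with the HLS inequality (\ref{bo}) at exponent $6/(3+\alpha)$ yields $g'(s)=O(s^3)$ near zero, so $g'(s)/s^3$ stays bounded while $\|u\|^2/s^2\to\infty$; hence $\phi_u'(s)/s^3\to+\infty$. As $s\to\infty$, from the computation in the proof of Lemma \ref{lhf}(i) one has $g'(s)\ge\theta g(s)/s$, so by Lemma \ref{lhf}(ii) $g'(s)\ge\theta s^{\theta-1}g(1)$ for $s\ge 1$, and since $\theta>4$ we get $g'(s)/s^3\to+\infty$, hence $\phi_u'(s)/s^3\to-\infty$. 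By the intermediate value theorem combined with strict monotonicity, there is exactly one $s_u>0$ with $\phi_u'(s_u)=0$; since $\phi_u(0)=0$, $\phi_u$ is positive for small $s$ (by Lemma \ref{lm}(i) applied along the ray) and $\phi_u(s)\to-\infty$ (cf.\ (\ref{ud})), this unique critical point is the global maximum.

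For (ii), the Nehari identity $\langle J'(u),u\rangle=0$ yields
$$\|u\|^2\le\|u\|^2+b\Bigl(\int_{\mathbb{Z}^3}|\nabla u|^2\,d\mu\Bigr)^2=\int_{\mathbb{Z}^3}(R_\alpha\ast F(u))f(u)u\,d\mu.$$
I would estimate the right-hand side just as in (\ref{uc}), using (\ref{ad}), (\ref{ae}), the HLS inequality (\ref{bo}), and the continuous embedding (\ref{ac}), to get
$$\|u\|^2\le \varepsilon\|u\|^4+C_\varepsilon\|u\|^{2p}.$$
Since $p>(3+\alpha)/3>1$, picking $\varepsilon$ small forces $\|u\|\ge\eta$ for some $\eta>0$.

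For (iii), I would use the standard combination
$$J(u)-\tfrac14\langle J'(u),u\rangle=\tfrac14\|u\|^2+\tfrac14\int_{\mathbb{Z}^3}(R_\alpha\ast F(u))f(u)u\,d\mu-\tfrac12\int_{\mathbb{Z}^3}(R_\alpha\ast F(u))F(u)\,d\mu.$$
By $(f_3)$, $F(u)\le\frac{2}{\theta}f(u)u$, so the last two terms combine to $(\tfrac14-\tfrac1\theta)\int(R_\alpha\ast F(u))f(u)u\,d\mu\ge 0$ because $\theta>4$. Hence for $u\in\mathcal{N}$, $J(u)\ge\tfrac14\|u\|^2\ge\tfrac{\eta^2}{4}>0$ by (ii). The main delicate point is the boundary analysis in (i), especially showing $g'(s)/s^3\to 0$ (or at least remains bounded) as $s\to 0^+$ so that the diverging $\|u\|^2/s^2$ term really forces $\phi_u'/s^3\to+\infty$; once this is in hand, the rest is routine.
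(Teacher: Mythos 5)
Your proposal is correct and matches the paper's argument in all essentials: part (i) uses the same positivity-near-zero / divergence-to-$-\infty$ structure together with $(f_4)$ for uniqueness (the paper phrases uniqueness as subtracting the two Nehari identities divided by $s^{4}$, which is exactly your strict monotonicity of $\phi_u'(s)/s^{3}$), and parts (ii) and (iii) coincide with the paper's up to the choice of $\tfrac14$ versus $\tfrac1\theta$ in the test combination $J-\lambda\langle J',\cdot\rangle$. One small caveat on your flagged ``delicate point'': the growth bounds (\ref{ad})--(\ref{ae}) combined with HLS only give $g'(s)=O(s^{2p-1})$ near $0$, which is weaker than $O(s^{3})$ when $p<2$ (allowed, since $p>\frac{3+\alpha}{3}$ may lie below $2$); but since $2p-1>1$ one still has $g'(s)/s^{3}=o(s^{-2})$, so the term $\|u\|^{2}/s^{2}$ dominates and your conclusion $\phi_u'(s)/s^{3}\to+\infty$ stands --- alternatively, $(f_1)$ together with the embedding $H\hookrightarrow\ell^{\infty}(\mathbb{Z}^{3})$ upgrades the bound to $g'(s)=o(s^{3})$.
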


\begin{proof}(i) For any $u \in H \backslash\{0\}$ and $s>0$, similar to $(\ref{uc})$, we get that
\begin{eqnarray*}\label{af}
    \int_{\mathbb{Z}^3}(R_\alpha \ast F(su))F(su)\,d\mu \leq\varepsilon s^4\|u\|^4+C_\varepsilon s^{2p}\|u\|^{2p}.
\end{eqnarray*}
Then
\begin{equation}\label{jv}
\begin{aligned}
J(s u) & =\frac{s^{2}}{2} \int_{\mathbb{Z}^{3}}\left(a|\nabla u|^{2}+V(x) u^{2}\right) d \mu+\frac{b s^{4}}{4}\left(\int_{\mathbb{Z}^{3}}|\nabla u|^{2} d \mu\right)^{2}-\frac{1}{2}\int_{\mathbb{Z}^3}(R_\alpha \ast F(su))F(su)\,d\mu  \\
& =\frac{s^{2}}{2}\|u\|^{2}+\frac{b s^{4}}{4}\left(\int_{\mathbb{Z}^{3}}|\nabla u|^{2} d \mu\right)^{2}-\frac{1}{2}\int_{\mathbb{Z}^3}(R_\alpha \ast F(su))F(su)\,d\mu \\&\geq\frac{s^{2}}{2}\|u\|^{2}-\varepsilon s^4\|u\|^4-C_\varepsilon s^{2p}\|u\|^{2p}.
\end{aligned}
\end{equation}
Since $p>\frac{3+\alpha}{3}>1$, let $\varepsilon\rightarrow 0^+$, we get easily that $J(s u)>0$ for $s>0$ small enough.

On the other hand, similar to (\ref{ud}), we get that
$$
J(s u) \rightarrow-\infty, \quad s \rightarrow \infty .
$$
Therefore, $\max\limits_{s>0} J(s u)$ is achieved at some $s_{u}>0$ with $s_{u} u \in \mathcal{N}$. In the following, we show the uniqueness of $s_{u}$ by a contradiction. Suppose that there exist $s_{u}^{\prime}>s_{u}>0$ such that $s_{u}^{\prime} u, s_{u} u \in \mathcal{N}$, then
$$
\begin{aligned}
& \frac{1}{\left(s_{u}^{\prime}\right)^{2}}\|u\|^2+b\left(\int_{\mathbb{Z}^{3}}|\nabla u|^{2} d \mu\right)^{2}=\int_{\mathbb{Z}^3}\frac{(R_\alpha \ast F(s'_uu))f(s'_uu)u}{\left(s_{u}^{\prime}\right)^{3}}\,d\mu, \\
& \frac{1}{\left(s_{u}\right)^{2}}\|u\|^2+b\left(\int_{\mathbb{Z}^{3}}|\nabla u|^{2} d \mu\right)^{2}=\int_{\mathbb{Z}^3}\frac{(R_\alpha \ast F(s_uu))f(s_uu)u}{\left(s_{u}\right)^{3}}\,d\mu.
\end{aligned}
$$
As a consequence,
$$
\begin{aligned}
\left(\frac{1}{\left(s_{u}^{\prime}\right)^{2}}-\frac{1}{\left(s_{u}\right)^{2}}\right)\|u\|^2=\int_{\mathbb{Z}^3}\frac{(R_\alpha \ast F(s'_uu))f(s'_uu)u}{\left(s_{u}^{\prime}\right)^{3}}\,d\mu-\int_{\mathbb{Z}^3}\frac{(R_\alpha \ast F(s_uu))f(s_uu)u}{\left(s_{u}\right)^{3}}\,d\mu,
\end{aligned}
$$
which is contradiction in view of $\left(f_{4}\right)$ and $s_{u}^{\prime}>s_{u}>0$.

\
\

(ii) By the HLS inequality (\ref{bo}), we have
\begin{eqnarray}\label{aj}
\int_{\mathbb{Z}^3}(R_\alpha \ast F(u))f(u)u\nonumber\,d\mu
&\leq& C\left(\int_{\mathbb{Z}^3} |F(u)|^{\frac{6}{3+\alpha}}\,d\mu  \right)^{\frac{3+\alpha}{6}}\left(\int_{\mathbb{Z}^3} |f(u)u|^{\frac{6}{3+\alpha}}\,d\mu  \right)^{\frac{3+\alpha}{6}}\nonumber\\&\leq&C\left(\int_{\mathbb{Z}^3} \left(\varepsilon|u|^2+C_\varepsilon|u|^{p}\right)^{\frac{6}{3+\alpha}}\,d\mu  \right)^{\frac{3+\alpha}{3}}\nonumber\\&\leq& \varepsilon\|u\|^4_{\frac{12}{3+\alpha}}+C_\varepsilon\|u\|^{2p}_{\frac{6p}{3+\alpha}}\nonumber\\&\leq& \varepsilon\|u\|^4+C_\varepsilon\|u\|^{2p}.
\end{eqnarray}

Let $u \in\mathcal{N}$. Then
$$
\begin{aligned}
0&=\left\langle J^{\prime}(u), u\right\rangle\\ & =\|u\|^{2}+b\left(\int_{\mathbb{Z}^{3}}|\nabla u|^{2} d \mu\right)^{2}-\int_{\mathbb{Z}^3}(R_\alpha \ast F(u))f(u)u\,d\mu  \\
& \geq\|u\|^{2}-\varepsilon\|u\|^{4}-C_{\varepsilon}\|u\|^{2p} .
\end{aligned}
$$
Since $p>1$, we get easily that there exists a constant $\eta>0$ such that $\|u\| \geq \eta>0$ with $u \in \mathcal{N}$.

\
\

(iii) For any $u \in \mathcal{N}$, by $(f_3)$ and (ii), we derive that
\begin{eqnarray*}
J(u) &=& J(u)-\frac{1}{\theta}\left\langle J^{\prime}(u), u\right\rangle \\
&=&\left(\frac{1}{2}-\frac{1}{\theta}\right)\|u\|^{2}+b\left(\frac{1}{4}-\frac{1}{\theta}\right)\left(\int_{\mathbb{Z}^{3}}|\nabla u|^{2} d \mu\right)^{2}\\&&+\frac{1}{2}\int_{\mathbb{Z}^3}(R_\alpha \ast F(u))\left(\frac{2}{\theta}f(u)u-F(u)\right)\,d\mu\\
& \geq &\left(\frac{1}{2}-\frac{1}{\theta}\right)\|u\|^{2} \\& \geq &\left(\frac{1}{2}-\frac{1}{\theta}\right) \eta^{2}\\&>&0.
\end{eqnarray*}

\end{proof}

In the following, we establish a  homeomorphic map between a unit sphere $S\subset H$ and $\mathcal{N}$. 

\begin{lm}\label{lg}
  Let $\left(h_{1}\right)$ and $\left(f_{1}\right)$-$\left(f_{4}\right)$ hold. Define the maps $s:H\backslash\{0\}\rightarrow (0,\infty)$, $u\mapsto s_u$ and $$
\begin{aligned}
\widehat{m}: H\backslash\{0\} & \rightarrow \mathcal{N}, \\
u & \mapsto \widehat{m}(u)=s_{u} u.
\end{aligned}
$$ 
Then 
\begin{itemize}
    \item [(i)] the maps $s$ and $\widehat{m}$ are continuous;
    \item[(ii)] the map $m:=\widehat{m}\mid_{S}$ is a homeomorphism between $S$ and $\mathcal{N}$, and the inverse of $m$ is given by
\begin{equation*}
m^{-1}(u)=\frac{u}{\|u\|}. 
\end{equation*}
\end{itemize}
\end{lm}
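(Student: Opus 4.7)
The plan is to establish part (i), the continuity of $s$ (hence of $\widehat{m}$), and then deduce part (ii) from it together with the uniqueness in Lemma \ref{lgj}(i). Throughout I exploit the compact embedding of $H$ into every $\ell^{q}$, $q\geq 2$, from Lemma \ref{lgg}, and the Nehari-type estimates already established.

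For the continuity of $s$, fix $u_{n}\to u$ in $H\setminus\{0\}$ and show that every subsequence of $\{s_{u_{n}}\}$ admits a further subsequence converging to $s_{u}$. A lower bound is immediate from Lemma \ref{lgj}(ii): since $s_{u_{n}}u_{n}\in\mathcal{N}$, one has $\eta\leq s_{u_{n}}\|u_{n}\|$ and $\|u_{n}\|\to\|u\|$, so $\liminf_{n}s_{u_{n}}>0$. For an upper bound, suppose toward a contradiction that $s_{u_{n}}\to\infty$ along some subsequence. Applying Lemma \ref{lhf}(ii) to the direction $u_{n}$ with $t=s_{u_{n}}\geq 1$ gives $I(s_{u_{n}}u_{n})\geq s_{u_{n}}^{\,\theta}\,I(u_{n})$. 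Strong convergence $u_{n}\to u$ in $H$ yields convergence in every $\ell^{q}$ (Lemma \ref{lgg}), so $I(u_{n})\to I(u)$, which is strictly positive for $u\neq 0$ (a consequence of $(f_{3})$--$(f_{4})$ together with Lemma \ref{lgj}(i) applied to $u$). Therefore
\[
J(s_{u_{n}}u_{n})\ \leq\ \tfrac{s_{u_{n}}^{\,2}}{2}\|u_{n}\|^{2}+\tfrac{b s_{u_{n}}^{\,4}}{4}\Bigl(\int_{\mathbb{Z}^{3}}|\nabla u_{n}|^{2}\,d\mu\Bigr)^{2}-s_{u_{n}}^{\,\theta}\,I(u_{n})\ \to\ -\infty,
\]
because $\theta>4$ dominates the polynomially growing norm and Kirchhoff terms. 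This contradicts the positive lower bound $J|_{\mathcal{N}}\geq(\tfrac{1}{2}-\tfrac{1}{\theta})\eta^{2}>0$ from Lemma \ref{lgj}(iii). Hence $\{s_{u_{n}}\}$ is bounded. Along a further subsequence $s_{u_{n}}\to s_{\ast}\in(0,\infty)$, so $s_{u_{n}}u_{n}\to s_{\ast}u$ strongly in $H$. Passing to the limit in $\langle J^{\prime}(s_{u_{n}}u_{n}),s_{u_{n}}u_{n}\rangle=0$---using HLS, the strong $\ell^{q}$-convergence, and the weak continuity of $I^{\prime}$ (Lemma \ref{lhj}(ii)) exactly as in the compactness analysis of Lemma \ref{ln}---yields $s_{\ast}u\in\mathcal{N}$. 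Uniqueness in Lemma \ref{lgj}(i) forces $s_{\ast}=s_{u}$, and the usual subsequence principle gives $s_{u_{n}}\to s_{u}$. Continuity of $\widehat{m}(u)=s(u)u$ is then immediate.

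For part (ii), I verify that $m:S\to\mathcal{N}$ is a continuous bijection whose inverse is $m^{-1}(v)=v/\|v\|$. \emph{Injectivity}: $s_{u_{1}}u_{1}=s_{u_{2}}u_{2}$ for $u_{1},u_{2}\in S$ gives $s_{u_{1}}=s_{u_{2}}$ upon taking norms (since $\|u_{i}\|=1$), so $u_{1}=u_{2}$. \emph{Surjectivity}: given $v\in\mathcal{N}$, set $u=v/\|v\|\in S$; then both $v=\|v\|u$ and $s_{u}u$ belong to $\mathcal{N}$, so by uniqueness in Lemma \ref{lgj}(i), $s_{u}=\|v\|$ and $m(u)=v$. \emph{Continuity of $m^{-1}$}: on $\mathcal{N}$ we have $\|v\|\geq\eta>0$ by Lemma \ref{lgj}(ii), so $v\mapsto v/\|v\|$ is continuous. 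Combined with the continuity of $m$ inherited from $\widehat{m}$, this shows $m$ is a homeomorphism with the claimed inverse.

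The principal obstacle is the upper bound on $s_{u_{n}}$ in the continuity argument: the strictly super-quartic growth of the Choquard term, via Lemma \ref{lhf}(ii) together with $\theta>4$, must be pitted against the merely quartic Kirchhoff term $\tfrac{b}{4}(\int|\nabla u|^{2}\,d\mu)^{2}$, which is what forces $s_{u_{n}}$ to remain bounded. The secondary technicality is the limit passage in the Nehari identity, where the nonlocal term requires HLS and the strong $\ell^{q}$-compactness from Lemma \ref{lgg} exactly as in Lemma \ref{ln}.
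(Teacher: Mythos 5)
Your proof is correct and follows essentially the same route as the paper: it bounds $s_{u_n}$ by playing the super-quartic growth of Lemma \ref{lhf}(ii) against the positive lower bound for $J$ on $\mathcal{N}$, identifies the limit via uniqueness in Lemma \ref{lgj}(i), and verifies the bijection algebraically. The only quibble is a citation: since Lemma \ref{lg} assumes only $(h_1)$, the step from strong $H$-convergence to $\ell^{q}$-convergence should invoke the continuous embedding (\ref{ac}) rather than the compact embedding of Lemma \ref{lgg} (which requires $(h_2)$); otherwise your argument, including the explicit check that $I(u)>0$ for $u\neq 0$, is if anything more complete than the paper's.
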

\begin{proof}
(i) Let $u_n\rightarrow u$ in $H\backslash\{0\}$. Denote $s_n=s_{u_n}$, then $\widehat{m}(u_n)=s_n u_n\in\mathcal{N}$. Since, for any $s>0$, $\widehat{m}(su)=\widehat{m}(u)$, without loss of generality, we may assume that $\{u_n\}\subset S$. By Lemma \ref{lgj}(ii), we get that $$s_n=\|s_nu_n\|\geq\eta>0.$$ 
    
     We claim that $\{s_n\}$ is bounded. Otherwise, $s_n\rightarrow\infty$ as $n\rightarrow\infty$. 
     By Lemma (\ref{lhf}) (ii), we have that $$\int_{\mathbb{Z}^3}(R_\alpha \ast F(s_nu_n))F(s_nu_n)\,d\mu\geq s_n^\theta\int_{\mathbb{Z}^3}\left(R_\alpha \ast F\left(u_n\right)\right)F\left(u_n\right)\,d\mu.$$
Since $\|u_n\|=1$, one gets easily that
$$\int_{\mathbb{Z}^3}\left(R_\alpha \ast F\left(u_n\right)\right)F\left(u_n\right)\,d\mu\leq C.$$
Then it follows from (i) and (iii) of Lemma \ref{lgj} and   $\theta>4$ that
\begin{eqnarray*}
 0 &<& \frac{J\left(s_nu_{n}\right)}{\left\|s_nu_{n}\right\|^{4}}\\&=&\frac{1}{2\left\|s_nu_{n}\right\|^{2}}+\frac{b\left(\int_{\mathbb{Z}^{3}}\left|\nabla (s_nu_{n})\right|^{2} d x\right)^{2}}{4\left\|s_nu_{n}\right\|^{4}}-\frac{\int_{\mathbb{Z}^3}(R_\alpha \ast F(s_nu_n))F(s_nu_n)\,d\mu}{\|s_nu_{n}\|^4}\\&\leq & \frac{1}{2s_n^{2}}+\frac{b}{4}-s_n^{\theta-4}\int_{\mathbb{Z}^3}\left(R_\alpha \ast F\left(u_n\right)\right)F\left(u_n\right)\,d\mu\rightarrow-\infty,  \end{eqnarray*}
 which is a contradiction. Hence  $\{s_n\}$ is bounded. By the boundedness of  $\{s_n\}$, up to a subsequence, there exists $s_0>0$ such that $s_n\rightarrow s_0$ and $\widehat{m}(u_n)\rightarrow s_0 u$. Since $\mathcal{N}$ is closed, $s_0u\in\mathcal{N}$. This implies that $s_0=s_u$. Hence we have
 $$s_{u_n}\rightarrow s_u$$
 and
 $$\widehat{m}(u_n)\rightarrow s_0u=s_uu=\widehat{m}(u).$$
Therefore $s$ and $\widehat{m}$ are continuous.
\
\

 (ii) Clearly, $m$ is continuous. For any $u\in\mathcal{N}$, let $\bar{u}=\frac{u}{\|u\|}$, then $\bar{u}\in S.$ Since $u=\|u\|\bar{u}$ and $s_{\bar{u}}$ is unique, we obtain $s_{\bar{u}}=\|u\|.$ Hence $m(\bar{u})=s_{\bar{u}}\bar{u}=u\in\mathcal{N}$. This implies that $m$ is surjective. Now we prove $m$ is injective. Let $u_1,u_2\in S$ and $m(u_1)=m(u_2)$. Then $s_1u_1=s_2u_2,$ and hence $s_1=s_2$. This means $u_1=u_2$. Hence $m$ is injective and has an inverse mapping $m^{-1}:\mathcal{N}\rightarrow S$ with $m^{-1}(u)=\frac{u}{\|u\|}.$ Then $m^{-1}(m(u))=u=id(u)$ for any $u\in S$.
    
\end{proof}



 Now we set $$c:=\inf\limits_{\mathcal{N}}J>0,$$
$$c_1:=\inf\limits_{u\in H \backslash\{0\}} \max _{s>0} J(su),$$
and
$$c_2:=\inf\limits_{\gamma\in\Gamma_2}\max\limits_{s\in [0,1]} J(\gamma(s)),$$ 
where 
$$\Gamma_2=\{\gamma\in C([0,1],H):\gamma(0)=0, J(\gamma(1))<0\}.$$ 

\begin{lm}\label{jb}
Let $\left(h_{1}\right)$ and $\left(f_{1}\right)$-$\left(f_{4}\right)$ hold. Then $c_1=c_2=c>0.$   
\end{lm}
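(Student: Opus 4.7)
The strategy is to establish the chain of inequalities $c_1\ge c \ge c_1$ (giving $c=c_1$) and then $c_1\ge c_2 \ge c$, with positivity coming from Lemma \ref{lgj}(iii). The equality $c=c_1$ is the direct output of Lemma \ref{lgj}(i): for any $u\in H\setminus\{0\}$ the fibration $s\mapsto J(su)$ attains its global maximum at the unique $s_u>0$ with $s_uu\in\mathcal N$, so $\max_{s>0}J(su)=J(s_uu)\ge c$, and taking $\inf_u$ gives $c_1\ge c$. Conversely, any $v\in\mathcal N$ forces $s_v=1$ by uniqueness, so $J(v)=\max_{s>0}J(sv)\ge c_1$.

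For $c_1\ge c_2$ I would use the linear comparison path. Fix $u\in H\setminus\{0\}$; the same estimate as in Lemma \ref{lm}(ii) (using Lemma \ref{lhf}(ii) and $\theta>4$) shows $J(tu)\to -\infty$ as $t\to\infty$, so some $t_0>0$ satisfies $J(t_0u)<0$. Then $\gamma(s):=st_0u$ belongs to $\Gamma_2$, and
\[
\max_{s\in[0,1]}J(\gamma(s))=\max_{s\in[0,1]}J(st_0u)\le \max_{t>0}J(tu),
\]
so infimizing first the left side over $\gamma$ and then over $u$ yields $c_2\le c_1$.

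The substantive step is $c_2\ge c$, for which I must show every $\gamma\in\Gamma_2$ meets $\mathcal N$. Since $J(\gamma(1))<0$ forces $\gamma(1)\ne 0$, the closed set $\{t\in[0,1]:\gamma(t)=0\}$ has supremum $a<1$, and $\gamma(t)\ne 0$ for $t\in(a,1]$. On this half-open interval the projection $h(t):=s_{\gamma(t)}$ is well defined and continuous by Lemma \ref{lg}(i). On one hand, $\|h(t)\gamma(t)\|\ge\eta$ by Lemma \ref{lgj}(ii), so $h(t)\ge \eta/\|\gamma(t)\|\to\infty$ as $t\to a^+$ (because $\gamma(t)\to\gamma(a)=0$). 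On the other hand, the critical-point equation for $s_u$ can be rewritten as $\|u\|^2/s^2+b(\int|\nabla u|^2d\mu)^2=s^{-3}\!\int(R_\alpha*F(su))f(su)u\,d\mu$, whose left side is decreasing and, by $(f_4)$, whose right side is strictly increasing in $s>0$; hence $s\mapsto J(su)$ is strictly increasing on $(0,s_u)$ and strictly decreasing on $(s_u,\infty)$. Since $J(s_{\gamma(1)}\gamma(1))>0>J(\gamma(1))$, we must be past the maximum at $s=1$, so $h(1)=s_{\gamma(1)}<1$. By IVT applied to $h$ on $(a,1]$ there is $t^*\in(a,1)$ with $h(t^*)=1$, i.e.\ $\gamma(t^*)\in\mathcal N$, and therefore $\max_t J(\gamma(t))\ge J(\gamma(t^*))\ge c$. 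Taking the infimum over $\gamma\in\Gamma_2$ gives $c_2\ge c$, and $c>0$ is Lemma \ref{lgj}(iii).

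The main obstacle is arranging the IVT cleanly in the presence of potential zeros of $\gamma$: applied naively on $[0,1]$ one can only conclude $g\circ\gamma:=\langle J'(\gamma),\gamma\rangle$ has a zero, which might occur where $\gamma=0$. Restricting to the component $(a,1]$ past the last visit to the origin—and exploiting the blow-up $h(t)\to\infty$ that comes from Lemma \ref{lgj}(ii)—is the key device that guarantees the intersection with $\mathcal N$ occurs at a genuinely nonzero point.
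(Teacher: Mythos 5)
Your proposal is correct and follows essentially the same route as the paper: the identity $c_1=c$ via Lemma \ref{lgj}(i), the inequality $c_1\ge c_2$ via the linear path $s\mapsto ss_0u$, and $c_2\ge c$ by showing every $\gamma\in\Gamma_2$ crosses $\mathcal N$ using the monotonicity of $s\mapsto J(su)$ on either side of $s_u$ (a consequence of $(f_4)$) together with the continuity of $u\mapsto s_u$. In fact your execution of the crossing step is more careful than the paper's: by restricting to the interval past the last zero of $\gamma$ and using $s_{\gamma(t)}\ge\eta/\|\gamma(t)\|\to\infty$ from Lemma \ref{lgj}(ii), you cleanly handle the fact that $s_u$ is undefined at $u=0$, a point the paper glosses over when it asserts that ``$\gamma(t)$ and $0$ belong to $H_1$ for $s$ small enough.''
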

\begin{proof}
    We first prove $c_1=c.$ By Lemma \ref{lgj} (i), there exists a unique $s_u>0$ such that
$J(s_{u}u)=\max\limits_{s>0} J(s u)$. Then 
$$c_1=\inf\limits_{u\in H \backslash\{0\}} \max _{s>0} J(su)=\inf\limits_{u\in H \backslash\{0\}}J(s_{u}u)= \inf\limits_{u\in\mathcal{N}} J(u)=c.$$

Now we prove $c_1\geq c_2.$ By (\ref{ud}), for any $u\in H\backslash\{0\},$ there exists a large $s_0>0$ such that $J(s_0 u)<0.$ Define
$$
\begin{aligned}
\gamma_0: [0,1] & \rightarrow H, \\
 s & \mapsto ss_0u.
\end{aligned}
$$ 
Since $\gamma_0(0)=0$ and $J(\gamma_0(1))<0$, we have $\gamma_0\in\Gamma_2$. Then for any $u\in H\backslash\{0\},$ we get that
$$\max _{s>0} J(su)\geq \max _{s\in[0,1]} J(ss_0u)=\max _{s\in[0,1]} J(\gamma_0(s))\geq \inf\limits_{\gamma\in\Gamma_2}\max\limits_{s\in [0,1]} J(\gamma(s)),$$
which implies that
$c_1\geq c_2.$

In the following, we prove $c_2\geq c.$  By Lemma \ref{lgj} (i),  for any $u\in H\backslash\{0\},$ there exists a unique $s_u>0$ such that $s_uu\in\mathcal{N}.$ Then we can separate $H$ into two components, namely $H=H_1\cup H_2$, where $H_1=\{u\in H: s_u\geq 1\}$ and $H_2=\{u\in H: s_u<1\}$.
We claim that each $\gamma\in\Gamma_2$ has to cross $\mathcal{N}$. In fact, one gets easily that $\gamma(t)$ and $0$ belong to $H_1$ for $s$ small enough. We only need to prove $\gamma(1)\in H_2$. 
Let $$G(s)=J(s\gamma(1)),\quad  s\geq 0.$$ Clearly $G(0)=0$ and $G(1)<0$. Similar arguments to (\ref{jv}), we get that
$G(t)>0$ for $s>0$ small enough. Hence there exists $s_{\gamma(1)}\in (0,1)$ such that $\max\limits_{s\geq 0}G(s)=J(s_{\gamma(1)}\gamma(1))$. Hence $\gamma(1)\in H_2.$ By the continuity of $s$ in Lemma \ref{lg}, we get that each $\gamma\in\Gamma_2$ has to cross $\mathcal{N}$. Then for any $\gamma\in \Gamma_2$, there exists $t_0\in (0,1)$ such that $\gamma(t_0)\in\mathcal{N}.$
As a consequence,
$$\inf\limits_{u\in \mathcal{N}}J(u)\leq J(\gamma(t_0))\leq \max\limits_{s\in [0,1]} J(\gamma(s)),$$
which implies that $c\leq c_2.$ Therefore, we have $c_1=c_2=c.$

\end{proof}

\
\

{\bf Proof of Theorem \ref{t1} } By Lemma \ref{lm} and Lemma \ref{ln}, one sees that $J$ satisfies the geometric structure and $(PS)_{c_2}$ condition. Then by the mountain pass theorem, there exists $u\in H$ such that $J(u)=c_2$ and $J'(u)=0$. Then it follows from Lemma \ref{jb} that $c_2=c>0$. Hence $u\neq 0$ and $u\in \mathcal{N}.$ The proof is completed. \qed

\
\

\section{ Proof of Theorem \ref{t2}}
In this section, we prove the existence of ground state solutions to the equation (\ref{aa}) under the conditions $(h_1)$ and $(h_3)$ on $V$. 

As we see, the condition $(h_2)$ ensures a compact embedding, see Lemma \ref{lgg}, while the condition $(h_3)$ leads to the lack of compactness. Moreover, since we  only assume that $f$ is continuous, $\mathcal{N}$ is not a $C^1$-manifold. This makes that we cannot use the Ekeland variational principle on $\mathcal{N}$ directly. Note that Lemma \ref{lgj} and Lemma \ref{lg} still hold. Hence we shall follow the lines of Hua and Xu \cite{HX} to prove this theorem. 

We show that $\Psi$ (see below) is of  class $C^{1}$ and there is a one-to-one correspondence between critical points of $\Psi$ and nontrivial critical points of $J$. The proof of the following lemma is similar to \cite{HX,SW}. For
completeness, we present the proof in the context.

\begin{lm}\label{lgk}
  Let $\left(h_{1}\right)$ and $\left(f_{1}\right)$-$\left(f_{4}\right)$ hold. Define the functional
  $$
  \begin{aligned}
\Psi: S &\rightarrow \mathbb{R},\\
w & \mapsto \Psi(w)=J(m(w)).
\end{aligned}
$$ 
Then
\begin{itemize}
    \item [(i)] $\Psi(w) \in C^{1}(S, \mathbb{R})$ and $$
\langle \Psi^{\prime}(w),z\rangle=\|m(w)\|\left\langle J^{\prime}(m(w)), z\right\rangle, \quad z \in T_{w}(S)=\{v\in H:(w, v)=0\} .
$$
\item [(ii)] $\left\{w_{n}\right\}$ is a Palais-Smale sequence for $\Psi$ if and only if $\left\{m\left(w_{n}\right)\right\}$ is a Palais-Smale sequence for $J$.

\item[(iii)] 
$w\in S$ is a critical point of $\Psi$ if and only if $m(w) \in \mathcal{N}$ is a nontrivial critical point of $J$. Moreover, the corresponding critical values of $\Psi$ and $J$ coincide and $
\inf\limits_{S} \Psi=\inf\limits_{\mathcal{N}}J.
$
\end{itemize}
 
\end{lm}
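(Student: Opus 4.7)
This is the standard Szulkin--Weth reduction from the Nehari manifold $\mathcal N$ to the unit sphere $S$, adapted to the discrete setting. The main obstacle is item (i): because $f$ is only continuous, we have no direct access to a chain-rule computation of $\Psi'$ through $\widehat m$. The trick is to bypass differentiability of the map $u\mapsto s_u$ and instead squeeze $\Psi(w+tz)-\Psi(w)$ between two quantities that have the same limit as $t\to 0$, using the defining maximum property of $s_w$ from Lemma~\ref{lgj}(i). Items (ii) and (iii) are then essentially algebraic consequences of the formula obtained in (i), together with the Nehari identity $\langle J'(m(w)),m(w)\rangle=0$ and the lower bound $\|m(w)\|\ge\eta$ from Lemma~\ref{lgj}(ii).

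\textbf{Step 1 (Gateaux derivative of $\Psi$).} Fix $w\in S$ and $z\in T_w(S)$. For $|t|$ small, write $s_t:=s_{w+tz}$, so that $m(w+tz)=s_t(w+tz)$ and $s_0=s_w=\|m(w)\|$; by Lemma~\ref{lg}(i) the map $t\mapsto s_t$ is continuous. Using $\Psi(w+tz)=\max_{s>0}J(s(w+tz))\ge J(s_w(w+tz))$ and $\Psi(w)=J(s_w w)\ge J(s_t w)$, I would sandwich the difference quotient as
\begin{equation*}
\frac{J(s_w(w+tz))-J(s_w w)}{t}\ \le\ \frac{\Psi(w+tz)-\Psi(w)}{t}\ \le\ \frac{J(s_t(w+tz))-J(s_t w)}{t}\qquad(t>0),
\end{equation*}
and the reverse for $t<0$. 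Applying the mean value theorem to $J$ on each side and letting $t\to 0$, continuity of $J'$ and of $s_t$ give
\begin{equation*}
\langle\Psi'(w),z\rangle=s_w\langle J'(s_w w),z\rangle=\|m(w)\|\langle J'(m(w)),z\rangle.
\end{equation*}
The right-hand side is continuous in $w\in S$ (by continuity of $m$ and of $J'$), so $\Psi\in C^1(S,\mathbb R)$.

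\textbf{Step 2 (PS-sequence equivalence).} For any $v\in H$ decompose $v=z+(w,v)w/\|w\|^{2}$ with $z\in T_w S$; since $m(w)\in\mathcal N$ we have $\langle J'(m(w)),w\rangle=s_w^{-1}\langle J'(m(w)),m(w)\rangle=0$, so
\begin{equation*}
\langle J'(m(w)),v\rangle=\langle J'(m(w)),z\rangle=\frac{1}{\|m(w)\|}\langle\Psi'(w),z\rangle,
\end{equation*}
and the projection $v\mapsto z$ has norm $\le 1$. Combined with the reverse inequality $\|\Psi'(w)\|_{T_w S^*}\le\|m(w)\|\,\|J'(m(w))\|_{H^*}$ from Step~1, this yields
\begin{equation*}
\|\Psi'(w)\|_{T_w S^*}\ =\ \|m(w)\|\cdot\|J'(m(w))\|_{H^*}.
\end{equation*}
Since $\|m(w_n)\|\ge\eta>0$ by Lemma~\ref{lgj}(ii), and $\|m(w_n)\|$ stays bounded whenever $J(m(w_n))$ is bounded (using $J(u)\ge(\tfrac12-\tfrac1\theta)\|u\|^2$ on $\mathcal N$, as in the proof of Lemma~\ref{lgj}(iii)), the two norms $\|\Psi'(w_n)\|$ and $\|J'(m(w_n))\|$ vanish simultaneously. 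Together with $\Psi(w_n)=J(m(w_n))$, this proves (ii).

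\textbf{Step 3 (critical points and infima).} If $\Psi'(w)=0$ then Step~1 gives $\langle J'(m(w)),z\rangle=0$ for all $z\in T_w S$, and the Nehari identity gives $\langle J'(m(w)),w\rangle=0$; by the decomposition of Step~2 we conclude $J'(m(w))=0$, and $m(w)\ne 0$ since $\|m(w)\|\ge\eta$. Conversely if $J'(m(w))=0$ then $\Psi'(w)=0$ by the formula in Step~1. Critical values coincide because $\Psi(w)=J(m(w))$ by definition, and the infima agree because $m\colon S\to\mathcal N$ is a bijection by Lemma~\ref{lg}(ii):
\begin{equation*}
\inf_{S}\Psi=\inf_{w\in S}J(m(w))=\inf_{u\in\mathcal N}J(u).
\end{equation*}
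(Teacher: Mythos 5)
Your proof is correct, and on the two technical points where the work actually lies it takes a genuinely different (and more careful) route than the paper. For part (i) the paper differentiates $t\mapsto J(\widehat m(w+tz))$ by a formal chain rule, writing $\tfrac{d}{dt}\big|_{t=0}\widehat m(w+tz)=s_w z$; this both drops the term coming from the variation of $s_{w+tz}$ (harmless, since it is multiplied by $\langle J'(s_ww),w\rangle=0$) and, more seriously, presupposes differentiability of $t\mapsto s_{w+tz}$, which is not available when $f$ is merely continuous -- only continuity of $s$ is established in Lemma \ref{lg}. Your squeeze argument, sandwiching $\Psi(w+tz)-\Psi(w)$ between $J(s_w(w+tz))-J(s_w w)$ and $J(s_t(w+tz))-J(s_t w)$ and passing to the limit via the mean value theorem and continuity of $J'$ and of $s$, is the rigorous Szulkin--Weth route and is the argument one should actually give under hypothesis $(f_1)$. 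For part (ii) the paper bounds the projection $H\to T_w(S)$ by a constant $1+C$ and obtains only the two-sided estimate $\|\Psi'(w)\|\le\|u\|\,\|J'(u)\|\le(1+C)\|\Psi'(w)\|$; you exploit the fact that $S$ is the unit sphere of the Hilbert norm, so that $v\mapsto v-(w,v)w$ is the orthogonal projection of norm one, and obtain the exact identity $\|\Psi'(w)\|=\|m(w)\|\,\|J'(m(w))\|$. Either version suffices for the equivalence of Palais--Smale sequences once one records, as you do explicitly and the paper leaves implicit, that $\|m(w_n)\|$ is bounded above along such a sequence because $J(u)\ge\left(\tfrac12-\tfrac1\theta\right)\|u\|^2$ on $\mathcal N$, and bounded below by $\eta$ by Lemma \ref{lgj}(ii). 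Part (iii) is handled identically in both proofs.
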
 
\begin{proof}
  (i) Define the functional 
  $$
  \begin{aligned}
\widehat{\Psi}: H\backslash\{0\}&\rightarrow \mathbb{R},\\
w & \mapsto \widehat{\Psi}(w)=J(\hat{m}(w)).
\end{aligned}
$$Since $J\in C^1(H,\mathbb{R})$ and $\widehat{m}(w)=s_ww$ is a continuous map, we have
  \begin{eqnarray*}
      \langle \widehat{\Psi}'(w),z\rangle&=&\frac{d}{dt}\mid_{t=0}\widehat{\Psi}(w+tz)\\&=&\frac{d}{dt}\mid_{t=0}J(\widehat{m}(w+tz))\\&=&J'(\widehat{m}(w+tz))\mid_{t=0}\cdot \frac{d}{dt}\mid_{t=0}\widehat{m}(w+tz)\\&=&J'(\widehat{m}(w))s_wz\\&=&s_w\langle J'(\widehat{m}(w)),z\rangle\\&=&\frac{\|\widehat{m}(w)\|}{\|w\|}\langle J'(\widehat{m}(w)),z\rangle.
  \end{eqnarray*}
Note that $\Psi=\widehat{\Psi}\mid_{S}$ and $m=\widehat{m}\mid_{S}$. Hence the result follows from the above equality.
  \
  \
  
 (ii) Denote $$\psi(u)=\frac{1}{2}\|u\|^2,\quad u\in H.$$ Clearly, $\psi\in C^1(H,\mathbb{R})$, and for any $v\in H$, $$\langle \psi'(u),v\rangle=(u,v).$$
 Hence $\psi'$ is bounded on finite sets and $\langle \psi'(w),w\rangle=1$ for all $w\in S.$
Then for any $w\in S$, we have $H=T_w(S)\oplus \mathbb{R}w,$ and the projection $$H\rightarrow T_w(S): z+tw\mapsto z$$ has uniformly bounded norm with respect to $w\in S$. In fact, note that $\psi'$ is bounded on finite sets and $\langle \psi'(w),(z+tw)\rangle=t$. If $\|z+tw\|=1,$ then $|t|\leq C$. Hence \begin{equation}\label{ue}
 \|z\|\leq |t|+\|z+tw\|\leq (1+C)\|z+tw\|,\quad w\in S, z\in T_w(S)~\text{and}~t\in \mathbb{R}.   
\end{equation}

Let $u:=m(w)$. On the one hand, by (i), we have
\begin{equation}\label{uf}
\|\Psi'(w)\|=\sup_{\substack{z\in T_w(S),\\ \|z\|=1}}\langle\Psi'(w),z\rangle=\|u\|\sup_{\substack{z\in T_w(S),\\ \|z\|=1}}\langle J'(u),z\rangle.   
\end{equation}

On the other hand, since $u\in\mathcal{N}$, we have $\langle J'(u),w\rangle=\frac{1}{\|u\|}\langle J'(u),u\rangle=0$.  Then it follows from (\ref{ue}), (\ref{uf}) and (i) that
\begin{eqnarray*}
\|\Psi'(w)\|&\leq & \|u\|\|J'(u)\| \\&=& \|u\|\sup_{\substack{z\in T_w(S),t\in\mathbb{R},\\ z+tw\neq 0}}\frac{\langle J'(u),(z+tw)\rangle}{\|z+tw\|}\\&\leq&(1+C)\sup_{z\in T_w(S)\backslash\{0\}}\frac{\|u\|\langle J'(u),z\rangle}{\|z\|}\\&=&(1+C)\sup_{z\in T_w(S)\backslash\{0\}}\frac{\langle\Psi'(w),z\rangle}{\|z\|}\\&=&(1+C)\|\Psi'(w)\|.
\end{eqnarray*}
By Lemma \ref{lgj} (ii), we have $\|u\|\geq\eta>0$ for all $u\in\mathcal{N}$. Then the result follows from the previous estimate and the fact $J(u)=\Psi(w)$.

\
\

(iii) By (\ref{uf}), $\Psi'(w)=0$ if and only if $J'(u)=0$. The rest is clear.
\end{proof}

\
\

{\bf Proof of Theorem \ref{t2} }  Note that $c=\inf\limits_{S}\Psi$. Let $\left\{w_{n}\right\} \subset S$ be a minimizing sequence such that $\Psi\left(w_{n}\right) \rightarrow c$. By Ekeland's variational principle, we may assume that $\Psi^{\prime}\left(w_{n}\right) \rightarrow 0$ as $n \rightarrow \infty$. Hence $\left\{w_{n}\right\}$ is a $(PS)_c$ sequence for $\Psi.$ 

Let $u_{n}=m\left(w_{n}\right) \in \mathcal{N}$. Then it follows from Lemma \ref{lgk} that $$J\left(u_{n}\right) \rightarrow c,\qquad\text{and}\qquad J^{\prime}\left(u_{n}\right) \rightarrow 0, \quad n \rightarrow \infty.$$  By (\ref{ba}), one gets that $\left\{u_{n}\right\}$ is bounded in $H$. Then there exists $u \in H$ such that $$u_{n} \rightharpoonup u,\quad\text{in~}H, \qquad\text{and}\qquad u_n\rightarrow u,\quad\text{pointwise~in}~\mathbb{Z}^3.$$

If \begin{equation}\label{hq}
 \left\|u_{n}\right\|_{\infty} \rightarrow 0,\quad n\rightarrow\infty,   
\end{equation} 
then by Lemma \ref{lgh}, we have that $u_{n} \rightarrow 0$ in $\ell^{t}\left(\mathbb{Z}^{3}\right)$ with $t>2$. Hence  \begin{eqnarray*}
    \int_{\mathbb{Z}^3}(R_\alpha \ast F(u_n))f(u_n)u_n\,d\mu&\leq& C\left( \|u_n\|^4_{\frac{12}{3+\alpha}}+\|u_n\|^{2p}_{\frac{6p}{3+\alpha}}\right)\\&\rightarrow& 0,\quad n\rightarrow\infty.
\end{eqnarray*}
Namely
$$\int_{\mathbb{Z}^3}(R_\alpha \ast F(u_n))f(u_n)u_n\,d\mu=o_{n}(1).$$  Then
$$
\begin{aligned}
0=\left\langle J^{\prime}\left(u_{n}\right), u_{n}\right\rangle & =\left\|u_{n}\right\|^{2}+b\left(\int_{\mathbb{Z}^{3}}\left|\nabla u_{n}\right|^{2} d \mu\right)^{2}-\int_{\mathbb{Z}^3}(R_\alpha \ast F(u_n))f(u_n)u_n\,d\mu\\
& \geq\left\|u_{n}\right\|^{2}+o_{n}(1),
\end{aligned}
$$
which implies that $\left\|u_{n}\right\| \rightarrow 0$ as $n \rightarrow \infty$. This contradicts $\left\|u_{n}\right\| \geq \eta>0$ in Lemma \ref{lgj} (ii). Hence (\ref{hq}) does not hold, and hence there exists $\delta>0$ such that
\begin{equation}\label{hw}
\liminf _{n \rightarrow \infty}\left\|u_{n}\right\|_{\infty} \geq \delta>0,
\end{equation}
which implies that $u\neq 0$.
Therefore, there exists a sequence $\left\{y_{n}\right\} \subset \mathbb{Z}^{3}$ such that
\begin{equation*}
\left|u_{n}(y_{n})\right| \geq \frac{\delta}{2}.
\end{equation*}
Let $k_{n}\in \mathbb{Z}^{3}$ satisfy $\left\{y_{n}-k_{n} \tau\right\} \subset \Omega$, where $\Omega=[0, \tau)^{3}$. By translations, let $v_{n}(y):=u_{n}\left(y+k_{n} \tau\right)$. Then for any $v_{n}$,
\begin{equation*}
\left\|v_{n}\right\|_{l^{\infty}(\Omega)} \geq\left|v_{n}\left(y_{n}-k_{n} \tau\right)\right|=\left|u_{n}(y_{n})\right| \geq \frac{\delta}{2}>0.
\end{equation*}
Since $V(x)$ is $\tau$-periodic, $J$ and $\mathcal{N}$ are invariant under the translation, we obtain that $\left\{v_{n}\right\}$ is also a $(PS)_c$ sequence for $J$ and bounded in $H$. Then there exists $v\in H$ with $v\neq 0$ such that $$v_{n} \rightharpoonup v,\quad\text{in~}H, \qquad\text{and}\qquad v_n\rightarrow v,\quad \text{pointwise~in~}\mathbb{Z}^3.$$
Now, we prove that $v$ is a critical point of $J$. We assume that there exists a nonnegative constant $A$ such that $\int_{\mathbb{Z}^{3}}|\nabla v_{n}|^{2} d \mu \rightarrow A$ as $n \rightarrow \infty$. Note that
$$
\int_{\mathbb{Z}^{3}}|\nabla v|^{2} d \mu \leq \liminf _{n \rightarrow \infty} \int_{\mathbb{Z}^{3}}\left|\nabla v_{n}\right|^{2} d \mu=A.
$$
We claim that
$$
\int_{\mathbb{Z}^{3}}|\nabla v|^{2} d \mu=A.
$$
Arguing by contradiction, we assume that $\int_{\mathbb{Z}^{3}}|\nabla v|^{2} d \mu<A$. For any $\phi\in C_c(\mathbb{Z}^3),$ we have $\langle J^{\prime}\left(v_{n}\right),\phi\rangle=o_n(1)$, namely
 \begin{equation}\label{hr}
\int_{\mathbb{Z}^{3}}\left(a \nabla v_{n} \nabla \varphi+V(x) v_{n} \varphi\right)\,d \mu+b \int_{\mathbb{Z}^{3}}\left|\nabla v_{n}\right|^{2} \,d\mu \int_{\mathbb{Z}^{3}} \nabla v_{n} \nabla \varphi\,d \mu-\int_{\mathbb{Z}^3}(R_\alpha \ast F(v_n))f(v_n)\phi\,d\mu=o_{n}(1).
\end{equation}
Let $n\rightarrow\infty$ in (\ref{hr}) and by Lemma \ref{lhj}, we get that
\begin{equation}\label{ht}
\int_{\mathbb{Z}^{3}}(a \nabla v \nabla \varphi+V(x) v \varphi)\, d \mu+b A \int_{\mathbb{Z}^{3}} \nabla v \nabla \varphi\,d \mu-\int_{\mathbb{Z}^3}(R_\alpha \ast F(v))f(v)\phi\,d\mu=0.
\end{equation}
Since $C_c(\mathbb{Z}^3)$ is dense in $H$,  (\ref{ht}) holds for any $\phi\in H$. Let $\phi=v$ in (\ref{ht}), then we have 
$$
\begin{aligned}
\langle J'(v),v\rangle & =\int_{\mathbb{Z}^{3}}\left(a|\nabla v|^{2}+V(x) v^{2}\right) d \mu+b\left(\int_{\mathbb{Z}^{3}}|\nabla v|^{2} d \mu\right)^{2}-\int_{\mathbb{Z}^3}(R_\alpha \ast F(v))f(v)v\,d\mu\\&<\int_{\mathbb{Z}^{3}}\left(a|\nabla v|^{2}+V(x) v^{2}\right) d \mu+b A\int_{\mathbb{Z}^{3}}|\nabla v|^{2} d \mu-\int_{\mathbb{Z}^3}(R_\alpha \ast F(v))f(v)v\,d\mu\\&=0.
\end{aligned}
$$
Let $$h(s)=\left\langle J^{\prime}(sv), s v\right\rangle,\quad s>0.$$ 
Then $h(1)=\left\langle J^{\prime}(v), v\right\rangle<0$.

By (\ref{aj}), we get that
$$\int_{\mathbb{Z}^3}(R_\alpha \ast F(sv))f(sv)sv\nonumber\,d\mu
\leq\varepsilon s^4\|v\|^4+C_\varepsilon s^{2p}\|v\|^{2p}.$$
Then for $s>0$ small enough, 
\begin{eqnarray}\label{he}
h(s)\nonumber&=&\left\langle J^{\prime}(sv), s v\right\rangle\nonumber\\&=& s^2 \|v\|^2+s^4 b\left(\int_{\mathbb{Z}^{3}}|\nabla v|^{2} d \mu\right)^{2}-\int_{\mathbb{Z}^3}(R_\alpha \ast F(sv))f(sv)sv\,d\mu\nonumber\\&\geq& s^2 \|v\|^2-\varepsilon s^4\|v\|^4-C_\varepsilon s^{2p}\|v\|^{2p}\nonumber\\&>&0.
\end{eqnarray}
Hence, there exists $s_{0} \in(0,1)$ such that $h\left(s_{0}\right)=0$, namely $ \left\langle J^{\prime}\left(s_{0} v\right), s_{0} v\right\rangle=0$. This means that $s_0v\in\mathcal{N}$, and hence $J\left(s_0 v\right)\geq c$.  By Lemma \ref{lhf}, we get that
$$\frac{1}{4}\int_{\mathbb{Z}^3}(R_\alpha \ast F(sv))f(sv)sv\,d\mu-\frac{1}{2}\int_{\mathbb{Z}^3}(R_\alpha \ast F(sv))F(sv)\,d\mu=\frac{1}{4}sg'(s)-g(s)>0,$$ and is strictly increasing with respect to $s>0$. By $(f_3)$, one has that
$$\left(\frac{1}{4}f(v)v-\frac{1}{2}F(v)\right)>\frac{1}{2}\left(\frac{2}{\theta}f(v)v-F(v)\right)\geq 0.$$
Then by Fatou's lemma, we obtain that
$$
\begin{aligned}
c & \leq J\left(s_0 v\right)=J\left(s_0 v\right)-\frac{1}{4}\left\langle J^{\prime}\left(s_0 v\right), s_0 v\right\rangle \\
& =\frac{s_0^{2}}{4} \|v\|^2+\frac{1}{4}\int_{\mathbb{Z}^3}(R_\alpha \ast F(s_0v))f(s_0v)s_0v\,d\mu-\frac{1}{2}\int_{\mathbb{Z}^3}(R_\alpha \ast F(s_0v))F(s_0v)\,d\mu\\
& <\frac{1}{4}\|v\|^2+\frac{1}{4}\int_{\mathbb{Z}^3}(R_\alpha \ast F(v))f(v)v\,d\mu-\frac{1}{2}\int_{\mathbb{Z}^3}(R_\alpha \ast F(v))F(v)\,d\mu\\
& \leq \liminf _{n \rightarrow \infty}\left[\frac{1}{4}\|v_n\|^2+\frac{1}{4}\int_{\mathbb{Z}^3}(R_\alpha \ast F(v_n))f(v_n)v_n\,d\mu-\frac{1}{2}\int_{\mathbb{Z}^3}(R_\alpha \ast F(v_n))F(v_n )\,d\mu\right] \\
& =\liminf _{n \rightarrow \infty}\left[J\left(v_{n}\right)-\frac{1}{4}\left\langle J^{\prime}\left(v_{n}\right), v_{n}\right\rangle\right] \\
& =c.
\end{aligned}
$$
This is a contradiction. Hence, \begin{equation*}
\int_{\mathbb{Z}^{3}}\left|\nabla v_{n}\right|^{2} d \mu \rightarrow \int_{\mathbb{Z}^{3}}|\nabla v|^{2} d \mu=A.
\end{equation*}
Then it follows from (\ref{hr}) and (\ref{ht}) that $J^{\prime}(v)=0$, and hence $v \in \mathcal{N}$. 
It remains to prove that $J(v)=c$. In fact, by Fatou's lemma, we obtain that
$$
\begin{aligned}
c & \leq J(v)-\frac{1}{4}\left\langle J^{\prime}(v), v\right\rangle \\
& =\frac{1}{4}\|v\|^2+\frac{1}{4}\int_{\mathbb{Z}^3}(R_\alpha \ast F(v))f(v)v\,d\mu-\frac{1}{2}\int_{\mathbb{Z}^3}(R_\alpha \ast F(v))F(v)\,d\mu\\
& \leq \liminf _{n \rightarrow \infty}\left[\frac{1}{4}\|v_n\|^2+\frac{1}{4}\int_{\mathbb{Z}^3}(R_\alpha \ast F(v_n))f(v_n)v_n\,d\mu-\frac{1}{2}\int_{\mathbb{Z}^3}(R_\alpha \ast F(v_n))F(v_n)\,d\mu\right] \\
& =\liminf _{n \rightarrow \infty}\left[J\left(v_{n}\right)-\frac{1}{4}\left\langle J^{\prime}\left(v_{n}\right), v_{n}\right\rangle\right] \\
& =c .
\end{aligned}
$$
Thus $J(v)=c$. The proof is completed.\qed

\
\

{\bf Declarations}

\
\

{\bf Conflict of interest:} The author declares that there are no conflicts of interests regarding the publication of
this paper.

\end{document}